\newcommand{\OOO}{\mathscr{O}}
\newcommand{\CC}{\mathbb{C}}
\newcommand{\PP}{\mathbb{P}}
\newcommand{\QQ}{\mathbb{Q}}
\newcommand{\RR}{\mathbb{R}}
\newcommand{\bF}{\mathbf{F}}
\newcommand{\Sym}{\mathfrak{S}}
\newcommand{\Alt}{\mathfrak{A}}
\newcommand{\Sing}{\operatorname{Sing}}
\newcommand{\Bir}{\operatorname{Bir}}
\newcommand{\Pic}{\operatorname{Pic}}
\newcommand{\Cl}{\operatorname{Cl}}
\newcommand{\Aut}{\operatorname{Aut}}
\newcommand{\Cr}{\operatorname{Cr}}
\newcommand{\rk}{\operatorname{rk}}
\newcommand{\B}{\mathbf{B}}
\newcommand{\GL}{\operatorname{GL}}
\newcommand{\PSL}{\operatorname{PSL}}
\newcommand{\PGL}{\operatorname{PGL}}
\newcommand{\SL}{\operatorname{SL}}
\newcommand{\mumu}{{\boldsymbol{\mu}}}
\newcommand{\Syl}{\operatorname{Syl}}
\newcommand{\rr}{\operatorname{r}}
\newcommand{\p}{\operatorname{p}_{\mathrm{a}}}
\newcommand{\g}{\operatorname{g}}
\newcommand{\xref}[1]{\textup{\ref{#1}}}
\theoremstyle{plain}
\newtheorem{lemma}[subsection]{Lemma}
\newtheorem{proposition}[subsection]{Proposition}
\newtheorem{corollary}[subsection]{Corollary}
\theoremstyle{definition}
\newtheorem{notation}[subsection]{Notation}
\newtheorem{example}[subsection]{Example}
\renewcommand\labelenumi{\rm (\roman{enumi})}
\renewcommand\theenumi{\rm (\roman{enumi})}
\renewcommand\labelenumii{\rm (\alph{enumii})}
\renewcommand\theenumii{\rm (\alph{enumii})}
\newcounter{NN}
\numberwithin{NN}{section}
\def\nr{\refstepcounter{NN}{\theNN}}
\renewcommand{\theNN}{\rm\arabic{NN}${}^o$}
\title{Embeddings of the symmetric groups \\ 
to the space Cremona group}
\author{Yuri Prokhorov}
\thanks{This work is supported by the Russian Science Foundation under grant \textnumero 18-11-00121.}
\date{}
\address{Steklov Mathematical Institute of RAS,
8 Gubkina street, Moscow 119991, Russia.
\newline
HSE University, Russian Federation,
Laboratory of Algebraic Geometry, 6 Usacheva str., Moscow, 119048, Russia.
}
\email{prokhoro@mi-ras.ru}
\begin{document}

\maketitle
\tableofcontents

\section{Introduction}
The classification of finite subgroups in the Cremona groups $\Cr_n(\CC)$ is an old-standing problem that goes back to works of classics of Italian algebraic geometry. In the last two decades the interest to this problem was reactivated; see for example \cite{Dolgachev-Iskovskikh}, \cite{P:ECM}, and references therein. 
In particular, now there is a basically complete classification of finite subgroups in the plane Cremona group $\Cr_2(\CC)$ \cite{Dolgachev-Iskovskikh}.

In this paper we are interested in embeddings of symmetric groups $\Sym_N$
to Cremona group $\Cr_3(\CC)$ and, more generally, to groups of birational self-maps of three-dimensional rationally connected varieties. 
This problem is interesting not only in its own sake but also in relation
with computation of essential dimension of $\Sym_N$ (cf. \cite{Duncan2010}).

\begin{proposition}
\label{main0}
Let $X$ be a rationally connected threefold and let $\Bir(X)$
be the group of its birational self-maps.
\begin{enumerate}
\item 
\label{main0a}
For $n\ge 8$ the symmetric group $\Sym_n$ does not admit any embedding 
to $\Bir(X)$.
\item 
\label{main0b}
Any embedding $\Sym_7\subset \Bir(X)$ up to conjugation is induced by 
the action of $\Sym_7$ on the smooth variety
$X_6'\subset \PP^5\subset \PP^6$ given by the equations 
\begin{equation}
\label{eq:X6}
\sum_{i=1}^7 x_i=\sum_{i=1}^7 x_i^2=\sum_{i=1}^7 x_i^3=0
\end{equation} 
with natural action of $\Sym_7$ on $x_1,\dots,x_7$ by permutations.
Moreover, any three-dimensional $\Sym_7$-Mori fiber space over a rationally connected base is equivariantly isomorphic to the variety~\eqref{eq:X6}.
\end{enumerate}
In particular, $\Sym_n$ is non embeddable to $\Cr_3(\CC)$ for $n\ge 7$
\textup(because the variety \eqref{eq:X6} is not rational \cite{Beauville2012}\textup).
\end{proposition}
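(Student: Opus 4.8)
The plan is to reduce the problem, via regularization and the equivariant Minimal Model Program, to the classification of $\Sym_n$-equivariant Mori fiber spaces, and then to analyze these case by case.

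First I would regularize: given a faithful embedding $\Sym_n\hookrightarrow\Bir(X)$ with $X$ rationally connected, equivariant resolution of the closure of the graph produces a smooth projective rationally connected threefold $Y$ carrying a \emph{biregular} faithful $\Sym_n$-action together with a $\Sym_n$-equivariant birational map $Y\dashrightarrow X$. Running the $\Sym_n$-equivariant MMP on $Y$ yields a $\Sym_n$-Mori fiber space $f\colon Y'\to S$, where $Y'$ has terminal singularities, $\rho^{\Sym_n}(Y'/S)=1$, and $S$ is rationally connected with $\dim S\in\{0,1,2\}$. Since faithfulness is preserved under equivariant birational maps, the proposition reduces to showing that for $n\ge 8$ no such $Y'$ exists, while for $n=7$ the only possibility is the complete intersection~\eqref{eq:X6}; the latter then yields the asserted uniqueness of the Mori fiber space as well.

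Second I would eliminate the fibered cases $\dim S\in\{1,2\}$ for $n\ge 7$. In both cases $\Sym_n$ acts on the lower-dimensional base $S$ with kernel $G_0$, and $G_0$ acts on the generic fiber. As the only normal subgroups of $\Sym_n$ for $n\ge 5$ are $1$, $\Alt_n$ and $\Sym_n$, there are only a few configurations. If $\dim S=1$ then $S\cong\PP^1$ and the image of $\Sym_n$ in $\Aut(\PP^1)=\PGL_2(\CC)$ must be a finite subgroup of $\PGL_2$; since $\Sym_n$ and $\Alt_n$ do not embed in $\PGL_2(\CC)$ for $n\ge 5$, a large normal subgroup must act on the del Pezzo surface fiber. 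If $\dim S=2$ then $\Sym_n/G_0$ embeds in $\Aut(S)\subset\Cr_2(\CC)$ while $G_0$ embeds into $\PGL_2$ over the function field. Each branch reduces to a faithful action of $\Sym_n$ or $\Alt_n$ on a rational surface, and the Dolgachev--Iskovskikh classification of finite subgroups of $\Cr_2(\CC)$ shows that $\Alt_7$ already does not embed. Hence for $n\ge 7$ only the Fano case $\dim S=0$ can occur.

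Third I would treat the Fano case, where $Y'$ is a $\Sym_n$-Fano threefold and $\Sym_n\hookrightarrow\Aut(Y')$ is faithful. The decisive input is representation-theoretic: the minimal dimension of a faithful linear representation of $\Sym_n$ is $n-1$, realized by the standard representation, so a faithful action on projective space has dimension at least $n-2$, with equality on $\PP^{n-2}$. Feeding this into the (pluri-)anticanonical embedding $Y'\hookrightarrow\PP(H^0(Y',-mK_{Y'})^*)$ forces the $\Sym_n$-module $H^0(Y',-mK_{Y'})$ to contain the standard representation and sharply bounds the degree and genus of $Y'$. Combined with the boundedness of terminal Fano threefolds and the known description of their automorphism groups, this should rule out a faithful $\Alt_8$-action on any Fano threefold (hence exclude $\Sym_n$ for $n\ge 8$) and force the case $n=7$ into the standard representation of $\Sym_7$ on $\PP^5$. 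Since the standard representation and its twist by the sign character induce the same action on $\PP^5$, there is a single faithful $\Sym_7$-action on $\PP^5$ up to conjugacy; the unique invariant quadric and cubic are the power sums $\sum x_i^2$ and $\sum x_i^3$, cutting out precisely~\eqref{eq:X6}.

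The main obstacle will be this last step. Pinning down $X_6'$ as the \emph{unique} $\Sym_7$-Fano threefold, and excluding $\Alt_8$ altogether, requires a careful traversal of the classification of terminal Fano threefolds and a computation, via fixed-point and character formulas, of which of them carry the requisite symmetry; spurious candidates such as quadric bundles or Fanos of higher Fano index must be discarded using the rigidity of the standard representation. Finally, the non-rationality of $X_6'$ \cite{Beauville2012} yields $\Sym_n\not\hookrightarrow\Cr_3(\CC)$ for $n\ge 7$: a rational $Y'$ would identify $\Bir(Y')$ with $\Cr_3(\CC)$, whereas the unique Mori fiber space~\eqref{eq:X6} is irrational.
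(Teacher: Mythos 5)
Your reduction steps match the paper: regularize to a biregular action, run the equivariant MMP, and kill the cases $\dim S=1,2$ using the Dolgachev--Iskovskikh classification (the paper's Propositions~\ref{thm:regularization} and~\ref{thm:MFS}; the del Pezzo fibration case there is handled slightly more carefully, via a fixed fibre $F\simeq\PP^2$ and Lemma~\ref{lemma-fixed-point}, but your route is viable). The real problem is your third step, where the proposal has a genuine gap. There is no usable ``classification of terminal Fano threefolds together with their automorphism groups'' to traverse, and your representation-theoretic input does not do the work you ask of it. First, the bound ``a faithful projective action of $\Sym_n$ needs dimension at least $n-2$'' concerns linear representations of $\Sym_n$, whereas an action on $\PP(V)$ only lifts to a representation of a central extension $\tilde\Sym_n$; spin-type representations can be much smaller (e.g.\ $\Alt_7$ acts faithfully on $\PP^3$ and $\Sym_6$ on $\PP^3$), so one must check the Schur cover case by case, as the paper does when it excludes $X\simeq\PP^3$ for $\Sym_7$. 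Second, and more seriously, knowing that $H^0(X,-K_X)$ contains the standard $6$-dimensional representation does \emph{not} ``sharply bound the genus'': a terminal Gorenstein Fano threefold can have $g$ well above $4$, and $H^0(X,-K_X)$ could a priori decompose as $6\oplus 1\oplus\cdots$ for many values of $g$. The paper must therefore argue separately for $g=5$ (a complete intersection of three quadrics, killed by counting invariant quadrics in $W_1\oplus W_6$) and for $g\ge 6$ (a substantial argument: $X$ contains no planes, a $G_0$-equivariant MMP terminates at $\PP^3$, and the exceptional locus of the last contraction is a curve whose degree, arithmetic genus, and component count are incompatible with an $\Alt_7$-action via the Hurwitz bound and the list of maximal subgroups of $\Alt_7$). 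None of this is present, even in outline, in your proposal.

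For part (i) you likewise give no actual mechanism for excluding $\Alt_8$ on a Fano threefold; the paper simply quotes its earlier result \cite[Theorem~1.5]{P:JAG:simple} that $\Alt_n$ for $n\ge 8$ does not act on any rationally connected threefold, which disposes of $\Sym_n$, $n\ge 8$, in one line. You should either cite that result or supply a genuine argument. Finally, note that the paper also needs Corollary~\ref{cor:NG-point7} to see that the $\Sym_7$-Fano is Gorenstein before the anticanonical embedding can be used; your sketch skips this point.
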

Note that embeddings to $\Cr_3(\CC)$ of some other classes of ``large'' 
finite groups were studied in \cite{P:JAG:simple}, \cite{BCDP:QS}.

Our second main result is related to the symmetric group $\Sym_6$. 
Unfortunately it is not complete.

\begin{proposition}
\label{main2}
Let $Y$ be a rationally connected threefold.
Then for any embedding $\Sym_6\subset \Bir(Y)$ there exists 
a $\Sym_6$-equivariant birational map $Y\dashrightarrow X$ such that one 
of the following holds:
\begin{enumerate}
\item 
\label{main2a}
$X$ is a Fano threefold with at worst terminal Gorenstein $\Sym_6\QQ$-factorial singularities and $\uprho(X)=1$;
\item 
\label{main2b}
$X$ is a Fano threefold with terminal $\Sym_6\QQ$-factorial singularities and $\uprho(X)=1$ such that 
all the non-Gorenstein points of $X$ are cyclic quotients of index $2$. 
The number $n$ of these points equals $12$ or $15$ and there are two possibilities:
\begin{enumerate}
\renewcommand\labelenumii{\rm (\Alph{enumii})}
\renewcommand\theenumii{\rm (\Alph{enumii})}
\item 
\label{cor-prop:RR12}
$n=12$, $-K_X^3=2g+4$, $g\ge -1$, $\dim |-K_X|=g+1$;
\item 
\label{cor-prop:RR15}
$n=15$, $-K_X^3=2g+11/2$, $g\ge -2$, $\dim |-K_X|=g+1$.
\end{enumerate}
\end{enumerate}
Moreover, any $G$-Mori fiber space is equivariantly isomorphic to one of the above cases.
\end{proposition}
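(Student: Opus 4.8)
The plan is to run the $G$-equivariant Minimal Model Program for $G=\Sym_6$ and then classify the resulting Mori fiber spaces. First I would regularize the action: replacing $Y$ by a suitable smooth projective model (via equivariant completion and resolution of singularities), I may assume that $G$ acts biregularly on a smooth projective rationally connected threefold that is $G$-equivariantly birational to $Y$. Running the $G$-MMP, which terminates with a $G$-Mori fiber space because the variety is rationally connected, produces a $G$-equivariant birational map $Y\dashrightarrow X$ onto a $G$-Mori fiber space $\phi\colon X\to S$ with $X$ terminal $G\QQ$-factorial, with $-K_X$ relatively ample, and with relative $G$-invariant Picard number $1$. It remains to treat the three cases $\dim S\in\{2,1,0\}$ and to show that only $\dim S=0$ survives.

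If $\dim S=2$, then $\phi$ is a conic bundle over a rational surface $S$, and the kernel of $G\to\Aut(S)$ acts faithfully on a general fiber $\PP^1$, hence embeds into $\PGL_2(\CC)$. Since the only normal subgroups of $\Sym_6$ are $1$, $\Alt_6$, $\Sym_6$, and neither $\Alt_6$ nor $\Sym_6$ embeds into $\PGL_2(\CC)$, the action on $S$ is faithful, so $\Sym_6\hookrightarrow\Cr_2(\CC)$. By the classification of $G$-minimal rational surfaces this is impossible: no del Pezzo surface has automorphism group containing $\Sym_6$, and a $\Sym_6$-conic bundle over $\PP^1$ would again force $\Alt_6\hookrightarrow\PGL_2(\CC)$.

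The case $\dim S=1$ is, I expect, the main obstacle. Here $S\cong\PP^1$ and $\phi$ is a del Pezzo fibration; the image of $G$ in $\Aut(\PP^1)=\PGL_2(\CC)$ has order at most $2$, so $\Alt_6$ acts faithfully and fiberwise on the generic fiber $X_\eta$, a del Pezzo surface over $\CC(t)$ with $\Alt_6$-invariant Picard number $1$. By the classification of finite subgroups of $\Cr_2(\CC)$ the only $\Alt_6$-minimal rational surface is the Valentiner plane with $\Alt_6\subset\PGL_3(\CC)$; in particular $\Alt_6$ admits no conic-bundle structure and no faithful action on a del Pezzo surface of degree at most eight. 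Since $\operatorname{Br}(\CC(t))=0$ by Tsen's theorem, $X_\eta$ carries no nontrivial Severi--Brauer twist and is therefore $\PP^2$ over $\CC(t)$ with the constant Valentiner action. Now in either case $\Sym_6$ acts faithfully on some fiber $\PP^2$: if it acts trivially on the base this is immediate, while if it acts through $\ZZ/2$ a transposition fixes a fiber over a base point, whose stabilizer in $\Sym_6$ is then the whole group. This gives $\Sym_6\hookrightarrow\PGL_3(\CC)$, impossible since $\Sym_6$ has no faithful three-dimensional projective representation. Hence no del Pezzo fibration occurs.

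Finally, for $\dim S=0$ the variety $X$ is a $G\QQ$-Fano threefold with terminal $G\QQ$-factorial singularities and $\uprho(X)=1$; if it is Gorenstein we are in case \ref{main2a}. Otherwise I would analyze Reid's basket of singularities, which is $\Sym_6$-invariant and so splits into orbits of terminal cyclic quotient points. The bound on the basket coming from $\chi(\OOO_X)=1$ and Reid's expression for $-K_X\cdot c_2(X)$, together with the orbit sizes realizable for $\Sym_6$, forces every non-Gorenstein point to be of type $\tfrac12(1,1,1)$ and their number $n$ to be $12$ or $15$. Because $X$ is Fano, Kawamata--Viehweg vanishing gives $h^i(\OOO_X(-K_X))=0$ for $i>0$, so $\dim|-K_X|+1=\chi(\OOO_X(-K_X))$; feeding a basket of $n$ points of type $\tfrac12(1,1,1)$ into Reid's orbifold Riemann--Roch formula and writing $\dim|-K_X|=g+1$ yields $-K_X^3=2g-2+n/2$, which gives the equalities of \ref{cor-prop:RR12} and \ref{cor-prop:RR15}, the ranges of $g$ being determined by $-K_X^3>0$. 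The final ``moreover'' assertion is precisely the statement that, the fibration cases having been excluded, every $G$-Mori fiber space is one of the Fano threefolds just described.
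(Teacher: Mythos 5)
Your overall architecture is the same as the paper's: regularize to a $G$-Mori fiber space, exclude positive-dimensional bases, and analyze the basket of a non-Gorenstein $G\QQ$-Fano via orbifold Riemann--Roch. The conic bundle exclusion and the final Riemann--Roch computation (yielding $-K_X^3=2g-2+n/2$ and the ranges of $g$) are fine. However, there are two genuine gaps.

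First, the del Pezzo fibration case when $\Sym_6$ acts on the base $\PP^1$ through $\mumu_2$. You assert that a transposition ``fixes a fiber over a base point'' and that $\Sym_6$ then acts faithfully on a fiber isomorphic to $\PP^2$. But the fiber over a fixed point of the involution is a \emph{special} fiber: Tsen's theorem and the classification of $\Alt_6$-surfaces only control the generic fiber, and nothing you have said shows the special fiber is $\PP^2$ (it could be degenerate, non-normal, or non-reduced), nor that the action of $\Sym_6$ on it is faithful. The paper closes exactly this hole by applying a Shokurov connectedness argument (the proof of \cite[Lemma~B.5]{Prokhorov2014x}) to the scheme fiber $F$ over the fixed point to get $F\simeq\PP^2$, and then, since $\Sym_6$ does \emph{not} act faithfully on $\PP^2$, concludes that $\Alt_6$ acts trivially on $F$ and derives the contradiction from Lemma~\ref{lemma-fixed-point} (a terminal threefold with an $\Alt_6$-action has no invariant points) --- i.e.\ from non-faithfulness rather than faithfulness. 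Some version of this step is indispensable.

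Second, the non-Gorenstein case. The bound $\sum_{Q}(r_Q-1/r_Q)\le 24$ combined with ``the orbit sizes realizable for $\Sym_6$'' does not force the conclusion: transitive $\Sym_6$-orbits of length $1$, $2$, $6$ and $10$, as well as points of index $>2$ sitting in small orbits, are all numerically permitted by that inequality. Ruling them out is the technical core of the paper's Proposition~\ref{prop:NG-point}: one passes to the index-one cover at a non-Gorenstein point, obtains a central extension $1\to\mumu_r\to G_P^\sharp\to G_P\to1$ acting on the ($\le4$-dimensional) tangent space, and uses the representation theory of $\Alt_6$, $\Sym_5$ and $(\Sym_3\times\Sym_3)\rtimes\mumu_2$ --- irreducibility forcing $\mumu_r$ to act by scalars, the determinant character splitting the sequence, the Sylow $3$-subgroup analysis for the length-$10$ orbit --- to exclude stabilizers $\Sym_6$, $\Alt_6$, $\Sym_5$ and $(\Sym_3\times\Sym_3)\rtimes\mumu_2$. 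None of this appears in your sketch, and the numerical bound alone will not substitute for it.
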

Recall that a normal $G$-variety $X$ is said to be \textit{$G\QQ$-factorial} if 
some multiple $nD$ of any $G$-invariant divisor is Cartier \cite{P:G-MMP}.

We do not know any examples of non-Gorenstein Fano threefolds
admitting an $\Sym_6$-action. We expect that the case \ref{main2}\ref{main2b}
does not occur. In contrast there are a lot of examples of actions of $\Sym_6$ on
Gorenstein Fano threefolds (case \ref{main2}\ref{main2a}).
In Sect.~\ref{sect:ex} we collect known ones. 
However we do not assert that our collection is complete. 
The particular the case of actions of $\Sym_6$ on del Pezzo threefolds is 
completely studied in Sect.~\ref{section:del-Pezzo}.

\subsection*{Acknowledgements.} 
The author would like to thank the anonymous referees 
for their remarks that helped him to improve the presentation.

\section{Preliminaries}
We work over a complex number field $\CC$ throughout.
\begin{notation}
\begin{itemize}
\item 
$\Sym_n$ and $\Alt_n$ denote the symmetric and the alternating groups, respectively. 
\item 
As usual, $\Pic(X)$ denotes the Picard group of a variety $X$ and 
$\uprho(X)$ is the rank of $\Pic(X)$.
\item 
$\Cl(X)$ denotes the Weil divisor class group of a normal variety $X$ and 
$\rr(X):=\rk \Cl(X)$.
\item
If a group $G$ acts on an object $A$, then $A^G$ is the set of $G$-invariant elements.
\item
If a group $G$ acts on a variety $X$, then 
$\uprho(X)^G:=\rk \Pic(X)^G$ and $\rr(X)^G:=\rk \Cl(X)^G$.

\end{itemize}
\end{notation}
Throughout this paper we use the terminology and notation of the equivariant minimal model program \cite{P:G-MMP}. In particular, a \textit{$G\QQ$-Fano variety} $X$ is a variety equipped with an action of a finite group $G$ such that $X$ has only terminal $G\QQ$-factorial singularities, $\rk\Pic(X)=1$, and the anticanonical divisor $-K_X$ is ample.
In this situation, we say that $X$ is a \textit{$G$-Fano variety} if $X$ is Gorenstein
or, equivalently, $K_X$ is a Cartier divisor. 

For any (possibly singular) Fano threefold $X$ we define its \textit{genus} as follows
\[
g=\g(X):=\dim |-K_X|-1=\dim H^0(X,-K_X)-2. 
\]
Thus $\g(X)$ is an integer $\ge -2$. This definition agrees with usual definition of genus for smooth Fano threefolds \cite{IP99}.

We need some standard information about groups $\Sym_6$ and $\Alt_6$ 
and their actions on finite sets and lower-dimensional algebraic varieties.

\begin{lemma}
\label{lem:rep}
\begin{enumerate}
 \item 
Let $\rho$ be a faithful irreducible representation of the group $\Sym_6$.
Then $\dim(\rho)\in \{5,\, 9,\, 10,\, 16\}$.
 \item 
Let $\psi$ be a faithful irreducible representation of the group $\Alt_6$.
Then $\dim(\psi)\in \{5,\, 8,\, 9,\, 10\}$. 
\end{enumerate}
\end{lemma}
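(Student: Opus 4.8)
The plan is to determine the faithful irreducible representations of $\Sym_6$ and $\Alt_6$ by working directly from their character tables. For $\Sym_6$, the irreducible representations are indexed by partitions of $6$; their dimensions are obtained from the hook-length formula and come out to be $1,1,5,5,5,5,9,9,10,10,16$. The trivial and sign representations are obviously not faithful; a representation $\rho$ is faithful precisely when its kernel is trivial, so I would check which of the remaining candidates have a nontrivial normal subgroup in their kernel. Since the only proper nontrivial normal subgroup of $\Sym_6$ is $\Alt_6$, a representation fails to be faithful exactly when it factors through $\Sym_6/\Alt_6\cong\ZZ/2$, i.e.\ when it is one-dimensional. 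Hence every irreducible representation of dimension $\ge 2$ is automatically faithful, and the list of faithful irreducible dimensions is exactly $\{5,9,10,16\}$.

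For $\Alt_6$ the argument is analogous but requires a little more care because $\Alt_6$ is simple. The irreducible representations of $\Alt_6$ have dimensions $1,5,5,8,8,9,10$ (these are standard, and several arise by restricting or splitting the $\Sym_6$-representations upon restriction to the index-two subgroup). Since $\Alt_6$ is simple and nonabelian, any nontrivial irreducible representation has trivial kernel and is therefore faithful. Thus the faithful irreducible dimensions are precisely $\{5,8,9,10\}$, discarding only the trivial representation. I would present the two computations in parallel, noting that the two five-dimensional representations of $\Sym_6$ restrict irreducibly to the two five-dimensional representations of $\Alt_6$, and that the $16$ of $\Sym_6$ restricts to $8+8$, which accounts for the appearance of the two eight-dimensional representations in the $\Alt_6$ list and their absence from the $\Sym_6$ list.

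There is no serious obstacle here; this is essentially a bookkeeping exercise in finite-group representation theory, and the only point demanding attention is the correct enumeration of dimensions. The cleanest way to organize it is to invoke the hook-length formula for $\Sym_6$ and then use the branching rule for restriction from $\Sym_6$ to $\Alt_6$: an irreducible $\Sym_6$-representation remains irreducible over $\Alt_6$ unless it is self-associate (i.e.\ isomorphic to its tensor with the sign representation), in which case it splits into two conjugate irreducibles of half the dimension. Among the $\Sym_6$-representations only the $16$ is self-associate, which is exactly what produces the pair of $8$'s for $\Alt_6$ and confirms both dimension lists simultaneously.
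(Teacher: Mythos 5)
Your argument is correct, and the dimension lists check out: the hook-length computation gives $1,1,5,5,5,5,9,9,10,10,16$ for $\Sym_6$ (squares summing to $720$) and the branching rule gives $1,5,5,8,8,9,10$ for $\Alt_6$ (squares summing to $360$); since the only proper nontrivial normal subgroup of $\Sym_6$ is $\Alt_6$ and $\Alt_6$ is simple, faithfulness is exactly as you describe. The paper states this lemma without proof as a standard fact, so there is no argument to compare against; your write-up is a perfectly adequate verification. One cosmetic slip: $\Sym_6$ has \emph{four} five-dimensional irreducibles, forming two associate pairs, each pair restricting to one of the two five-dimensional irreducibles of $\Alt_6$ --- this does not affect the conclusion.
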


Recall that $[\Aut(\Sym_6):\operatorname{Inn}(\Sym_6)]=2$,
where $\operatorname{Inn}(\Sym_6)$ is the subgroup of inner automorphisms.
Let $\upsilon$ be an outer automorphism.

\begin{proposition}[see e.g. {\cite[\S~2.6, Theorem~2.4]{Wilson2009}}, \cite{atlas}]
\label{prop:atlas}
$\ $
\begin{enumerate}
 \item \label{prop:atlasS6}
Up to conjugacy, a maximal subgroup of $\Sym_6$ is one of the following: 
\newcommand{\hsh}{\hspace{0.6em}}
\[
\Alt_6,\hsh \Sym_5,\hsh 
\upsilon(\Sym_5),\hsh 
\mathrm{N}(\Sym_3\times \Sym_3)\simeq (\Sym_3\times \Sym_3)\rtimes \mumu_2,\hsh 
\Sym_4\times \Sym_2,\hsh \upsilon(\Sym_4\times \Sym_2).
\]
 \item \label{prop:atlasA7}
Up to conjugacy, a maximal subgroup of $\Alt_7$ is one of the following: 
\renewcommand{\hsh}{\hspace{2em}}
\[
\Alt_6,\hsh \Sym_5,\hsh H_1,\hsh H_2,\hsh \mathrm{N}(\Alt_4\times \Alt_3)
\]
where $H_1\simeq H_2\simeq \PSL_2(\bF_7)$ and $|\mathrm{N}(\Alt_4\times \Alt_3|=72$.
\end{enumerate}
\end{proposition}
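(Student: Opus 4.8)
The plan is to treat the two parts as standard facts about the maximal subgroup structure of the almost-simple groups $\Sym_6$ and $\Alt_7$, which can be extracted directly from the cited references (Wilson's textbook and the ATLAS). Since this is a group-theoretic classification rather than something requiring the birational geometry developed later, the proof will be short and will consist mainly of organizing the known subgroup lattice, supplemented by the outer-automorphism bookkeeping that makes $\Sym_6$ special.

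For part \ref{prop:atlasS6}, I would first note that $\Alt_6$ is the unique index-$2$ normal subgroup, hence maximal. The remaining maximal subgroups are point stabilizers and imprimitivity stabilizers of the natural action on $6$ letters, together with their images under the outer automorphism $\upsilon$. Concretely, $\Sym_5$ arises as a point stabilizer of the natural degree-$6$ action; the stabilizer of a partition of $\{1,\dots,6\}$ into two blocks of size $3$ gives $\mathrm{N}(\Sym_3\times\Sym_3)\simeq(\Sym_3\times\Sym_3)\rtimes\mumu_2$; the stabilizer of a partition into blocks of sizes $4$ and $2$ gives $\Sym_4\times\Sym_2$. Because $\Sym_6$ admits an outer automorphism, each of these transitive/intransitive embeddings has an $\upsilon$-twisted counterpart that is \emph{not} conjugate to the original inside $\Sym_6$; this accounts for the pairs $\Sym_5,\upsilon(\Sym_5)$ and $\Sym_4\times\Sym_2,\upsilon(\Sym_4\times\Sym_2)$, while $\mathrm{N}(\Sym_3\times\Sym_3)$ turns out to be $\upsilon$-invariant up to conjugacy and so contributes a single class. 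The content of the statement is precisely that this list, and the collapsing of exactly these conjugacy classes under $\upsilon$, is complete; I would cite the ATLAS for completeness rather than re-derive the full maximality verification.

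For part \ref{prop:atlasA7}, the analogous reasoning applies inside $\Alt_7$: the point stabilizer of the natural action on $7$ letters is $\Alt_6$ (maximal of index $7$); the stabilizer of a pair, intersected with $\Alt_7$, gives $\Sym_5$; the two classes $H_1\simeq H_2\simeq\PSL_2(\bF_7)$ arise from the two inequivalent embeddings of the simple group of order $168$ coming from its action on the $7$ points of the Fano plane, fused by an outer automorphism of $\Alt_7$ but distinct inside $\Alt_7$; and the imprimitivity stabilizer of a partition $\{4,3\}$ yields $\mathrm{N}(\Alt_4\times\Alt_3)$ of order $72$. Again the substantive claim is completeness of the list, which I would import from the cited sources.

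The only genuinely delicate point, and the step I expect to require the most care, is the bookkeeping of the outer automorphism $\upsilon$ of $\Sym_6$: one must verify that $\Sym_5$ and $\upsilon(\Sym_5)$ (and likewise the two copies of $\Sym_4\times\Sym_2$) really are non-conjugate in $\Sym_6$, while $\mathrm{N}(\Sym_3\times\Sym_3)$ is self-paired. The conceptual reason is that $\upsilon$ interchanges the transitive and intransitive embeddings of $\Sym_5$ (equivalently, it swaps the two conjugacy classes of subgroups isomorphic to $\Sym_5$, one acting with point stabilizers of sizes $5+1$ and the other acting transitively), so the two classes are genuinely distinct as subgroups even though abstractly isomorphic. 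This is exactly the phenomenon underlying the exceptional outer automorphism of $\Sym_6$, and it is the reason the list in \ref{prop:atlasS6} has six entries rather than four. Since all of this is classical and recorded in the references, I would present it as a citation with a brief indication of the transitive-versus-intransitive distinction rather than a full independent proof.
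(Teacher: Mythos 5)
Your proposal is correct and takes essentially the same route as the paper, which gives no proof at all beyond the citation to Wilson's book and the ATLAS in the proposition's header. Your additional bookkeeping (point/partition stabilizers, the $\upsilon$-fusion of the two $\Sym_5$ and two $\Sym_4\times\Sym_2$ classes, the self-paired $\mathrm{N}(\Sym_3\times\Sym_3)$, and the two Fano-plane copies of $\PSL_2(\bF_7)$ in $\Alt_7$) is accurate and consistent with the cited sources.
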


\begin{corollary}
\label{cor:f-p}
Let $G:=\Sym_6$ act transitively on a set $\Omega$ with 
$|\Omega|\le 16$. Let $G_P$ be the stabilizer of $P\in \Omega$. 
There are only the following cases:\renewcommand\arraystretch{1.5}
\begin{center}\rm 
\begin{tabular}{|p{0.1\textwidth}||p{0.05\textwidth}|p{0.1\textwidth}|p{0.1\textwidth}|p{0.09\textwidth}|p{0.2\textwidth}|p{0.15\textwidth}|}
\hline
No.&
\nr\label{cor:f-p1}&
\nr \label{cor:f-p2}&
\nr \label{cor:f-p6}&
\nr \label{cor:f-p12}&
\nr \label{cor:f-p10}&
\nr \label{cor:f-p15}
\\
\hline
$|\Omega|$&$1$&$2$&$6$&$12$&$10$&$15$
\\
\hline
$G_P$& $\Sym_6$
&$\Alt_6$&$\Sym_5$&$\Alt_5$&
$(\Sym_3\times \Sym_3)\rtimes \mumu_2$&$\Sym_4\times \Sym_2$
\\
\hline
\end{tabular}
\end{center}
\end{corollary}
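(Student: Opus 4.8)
The plan is to translate the statement about transitive $\Sym_6$-sets into a purely group-theoretic question about subgroups of small index and then run through Proposition~\ref{prop:atlas}\ref{prop:atlasS6}. By the orbit--stabilizer correspondence a transitive $G$-action on $\Omega$ is the coset action on $G/G_P$, so $|\Omega|=[G:G_P]$ and the constraint $|\Omega|\le 16$ is equivalent to $|G_P|\ge |G|/16=720/16=45$. Since a transitive $G$-set is determined up to isomorphism by the conjugacy class of $G_P$, it suffices to list, up to conjugacy, all subgroups $H\le \Sym_6$ with $|H|\ge 45$ together with their indices, and then group them by isomorphism type of $H$ and value of $[G:H]$.

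First I would dispose of the maximal subgroups directly. By Proposition~\ref{prop:atlas}\ref{prop:atlasS6} these are $\Alt_6$, $\Sym_5$, $\upsilon(\Sym_5)$, $\N(\Sym_3\times\Sym_3)\simeq(\Sym_3\times\Sym_3)\rtimes\mumu_2$, $\Sym_4\times\Sym_2$, and $\upsilon(\Sym_4\times\Sym_2)$, of orders $360,120,120,72,48,48$ and hence of indices $2,6,6,10,15,15$. All satisfy $|H|\ge 45$, and grouping the two outer-conjugate pairs by isomorphism type they account for cases \ref{cor:f-p2}, \ref{cor:f-p6}, \ref{cor:f-p10}, \ref{cor:f-p15}, together with the trivial action $G_P=\Sym_6$ of case \ref{cor:f-p1}. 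It then remains to treat a \emph{non-maximal} $H$, which lies inside one of these maximal subgroups $M$ and must satisfy $45\le |H|<|M|$. For $M=\N(\Sym_3\times\Sym_3)$ and $M=\Sym_4\times\Sym_2$ this forces $[M:H]=1$ since $|M|/45<2$, so no proper $H$ arises; for $M=\Sym_5$ (resp.\ $\upsilon(\Sym_5)$) the only possibility is the index-two subgroup $\Alt_5$ of order $60$.

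The real work is the case $M=\Alt_6$, where I must show that the only proper subgroup of order $\ge 45$ is $\Alt_5$ (of index $12$ in $\Sym_6$), giving case \ref{cor:f-p12}; I expect this to be the main obstacle, and I would handle it without invoking the full subgroup lattice of $\Alt_6$. Since $\Alt_6$ is simple of order $360$, any transitive action on $d$ points is faithful, so $360\mid d!$, which fails for $d\le 5$; hence every proper subgroup has index $\ge 6$ and order $\le 60$. By Lagrange an order with $45\le |H|\le 60$ dividing $360$ is $45$ or $60$. A group of order $45=3^2\cdot 5$ is nilpotent (both Sylow subgroups are normal) and therefore contains an element of order $15$, whereas the element orders in $\Alt_6$ are only $1,2,3,4,5$; this rules out $45$ and forces $|H|=60$, so $H\cong\Alt_5$. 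Finally I would note that the $\Alt_5$ found inside $\Alt_6$ and the one arising as $\Sym_5\cap\Alt_6$ (or $\upsilon(\Sym_5)\cap\Alt_6$) have the same isomorphism type and index, so they contribute the single case \ref{cor:f-p12}. Collecting everything yields exactly the six cases of the table.
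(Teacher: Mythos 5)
Your proposal is correct and is essentially the argument the paper intends: the corollary is stated without proof as an immediate consequence of Proposition~\ref{prop:atlas}\ref{prop:atlasS6} via orbit--stabilizer, and your enumeration of the non-maximal subgroups of order at least $45$ supplies exactly the missing details. The only step you leave slightly implicit is that a subgroup of order $60$ in $\Alt_6$ must be isomorphic to $\Alt_5$; this follows from the same element-of-order-$15$ observation you use to exclude order $45$, since a non-simple group of order $60$ has a normal Sylow $5$-subgroup and hence an element of order $15$, which $\Alt_6$ lacks.
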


\begin{lemma}
\label{rat-surf}
The groups $\Sym_6$ and $\Alt_7$ do not admit embeddings into $\Cr_2(\CC)$
nor into $\Bir(S)$, where $S$ is an elliptic ruled surface.
\end{lemma}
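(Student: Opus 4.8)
The plan is to handle both statements by exhibiting, in each situation, a canonical fibration and then invoking the classification of finite subgroups of $\PGL_2$ over a field of characteristic zero: such a group is cyclic, dihedral, $\Alt_4$, $\Sym_4$ or $\Alt_5$. In particular none of these contains $\Alt_6$, and none is isomorphic to $\Alt_7$. Since $\Alt_6\subset\Sym_6$ is simple and $\Alt_7$ is simple, it suffices in each case to force a copy of $\Alt_6$ (resp.\ $\Alt_7$) into some $\PGL_2(K)$, which is impossible.

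First I would treat $\Bir(S)$ for an elliptic ruled surface $\pi\colon S\to E$. As $E$ is the Albanese variety of $S$ and $\pi$ is the Albanese map, every birational self-map of $S$ descends to an automorphism of $E$ and carries fibres to fibres; this gives a homomorphism $\Bir(S)\to\Aut(E)$ whose kernel is the group of fibrewise transformations, that is, $\PGL_2(\CC(E))$ (the generic fibre being $\PP^1_{\CC(E)}$). Now $\Aut(E)=E\rtimes F$ with $F$ cyclic of order dividing $6$, so its finite subgroups are solvable. Hence the image of $G=\Sym_6$ or $\Alt_7$ in $\Aut(E)$ is a solvable quotient of $G$, forcing $\Alt_6$ (resp.\ $\Alt_7$) into the kernel $\PGL_2(\CC(E))$ --- a contradiction.

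For $\Cr_2(\CC)$ I would run the $G$-equivariant minimal model program on a smooth rational $G$-surface, reducing to a $G$-minimal model that is either a conic bundle $S\to\PP^1$ with $\uprho(S)^G=2$ or a del Pezzo surface with $\uprho(S)^G=1$. A conic bundle gives $G\to\PGL_2(\CC)$ on the base, with fibrewise kernel inside $\PGL_2(\CC(t))$ (the generic fibre is $\PP^1$ by Tsen), and both image and kernel are again too small to contain $\Alt_6$, excluding $G$. The remaining task is to eliminate a $G$-minimal del Pezzo surface of each degree $d=K_S^2$. For $d=9$ one has $S=\PP^2$ and $G\hookrightarrow\PGL_3(\CC)$, impossible because neither group has a faithful $3$-dimensional projective representation (Lemma~\xref{lem:rep}, together with the fact that the Schur multiplier of $\Sym_6$ is $\mumu_2$); for $d\in\{2,3,5\}$ the maximal orders $336$, $648$, $120$ of $\Aut(S)$ and for $d=4$ the inclusion $\Aut(S)\subset W(\mathrm{D}_5)$ of order $1920$ contradict $|\Sym_6|=720$ (note $720\nmid 1920$) and $|\Alt_7|=2520$; the cases $d=6,7$ and $d=8$ with $S=\FF_1$ are non-minimal or have solvable automorphism groups, while $S=\PP^1\times\PP^1$ is excluded because a large (almost) simple group cannot embed in $(\PGL_2\times\PGL_2)\rtimes\mumu_2$.

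The hard part is the degree-one case. Here I would use that $|-K_S|$ is a pencil with a single base point $P_0$, necessarily $G$-fixed; blowing it up yields a $G$-equivariant rational elliptic surface over the $\PP^1$ parametrising the pencil. This presents $G$ as an extension of a finite subgroup of $\PGL_2(\CC)$ (the action on that $\PP^1$) by the group of fibrewise transformations, which is solvable (Mordell--Weil translations extended by a cyclic group of order at most $6$). As $\Alt_6$ is simple and embeds into neither the solvable kernel nor the $\PGL_2$-quotient, both $\Sym_6$ and $\Alt_7$ are ruled out. Alternatively, one may simply read off the tables of \cite{Dolgachev-Iskovskikh} for $d=1,2$, where neither group occurs.
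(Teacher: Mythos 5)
Your argument is correct in substance, but it is a genuinely different route from the paper's: the paper disposes of Lemma~\ref{rat-surf} with a one-line citation to the Dolgachev--Iskovskikh classification of finite subgroups of $\Cr_2(\CC)$, whereas you reprove the statement from scratch via the two-dimensional equivariant MMP plus the classification of finite subgroups of $\PGL_2(K)$ in characteristic zero. What your approach buys is independence from the full classification tables: the elliptic ruled case (Albanese fibration, solvable image in $\Aut(E)$, kernel in $\PGL_2(\CC(E))$), the conic bundle case, and the degrees $d=9,8,7,6,4,1$ are all handled by clean structural arguments (simplicity of $\Alt_6$ and $\Alt_7$, absence of faithful $3$-dimensional projective representations, $720\nmid 1920$, solvability of the fibrewise group of the anticanonical elliptic fibration). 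The one place where you are still implicitly leaning on classification data is $d\in\{2,3\}$: the bounds $336$ and $648$ for the maximal order of $\Aut(S)$ are themselves nontrivial (if classical) classification facts, and they cannot be replaced by the naive representation-theoretic argument --- note that $\Sym_6$ \emph{does} embed into $\PGL_4(\CC)$ via the $4$-dimensional representation of $2.\Sym_6$ (this is exactly the first example in Sect.~\ref{sect:ex} of the paper), so for cubic surfaces one must either quote the $648$ bound, check that this projective representation admits no invariant cubic, or fall back on \cite{Dolgachev-Iskovskikh} anyway. Two further small points worth making explicit: for $d=9$ the $\Alt_7$ case needs that the triple cover $3.\Alt_7$ has no faithful $3$-dimensional representation (your parenthetical only addresses the Schur multiplier of $\Sym_6$); and in the degree-one case the kernel of the action on the base is in fact cyclic (it preserves the zero section coming from the exceptional curve), which is even cleaner than the Mordell--Weil description you give. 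None of these affects the validity of the proof.
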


\begin{proof}
See e.g. \cite{Dolgachev-Iskovskikh}.
\end{proof}

\begin{lemma}[\cite{Dolgachev-Iskovskikh}] 
\label{rat-surfA6}
\begin{enumerate}
\item 
The group $\Alt_6$ does not admit embeddings into 
$\Bir(S)$, where $S$ is an elliptic ruled surface.
\item 
Up to conjugacy 
and automorphisms of $\Alt_6$ there exists exactly 
one embedding $\Alt_6\subset \PGL_3(\CC)$.
\end{enumerate}
\end{lemma}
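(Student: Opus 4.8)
Both statements are classical properties of $\Alt_6$; here is how I would reconstruct them.

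For part~(i) the plan is to exploit the Albanese map. As $S$ is birational to $E\times\PP^1$ and the Albanese variety is a birational invariant of smooth projective surfaces, we have $\operatorname{Alb}(S)\simeq E$ and the Albanese map $S\dashrightarrow E$ is the ruling. Every $f\in\Bir(S)$ induces an affine automorphism of $E$, giving a homomorphism $\Bir(S)\to\Aut(E)$ whose kernel consists of the birational self-maps that preserve the ruling and act trivially on the base, i.e.\ those defined over the function field $K:=\CC(E)$. Since the generic fibre is $\PP^1_K$, this kernel is $\PGL_2(K)$, so
\[
1\longrightarrow\PGL_2(K)\longrightarrow\Bir(S)\longrightarrow\Aut(E).
\]
Now $\Aut(E)=E\rtimes\Aut(E,0)$ is solvable whereas $\Alt_6$ is simple, so any homomorphism $\Alt_6\to\Aut(E)$ is trivial; hence a hypothetical embedding would land in $\PGL_2(K)$. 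But a finite subgroup of $\PGL_2(K)\subset\PGL_2(\bar K)$ in characteristic zero is cyclic, dihedral, $\Alt_4$, $\Sym_4$ or $\Alt_5$, and $\Alt_6$ is none of these --- a contradiction.

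For part~(ii) I would translate the problem into projective representation theory. An embedding $\Alt_6\subset\PGL_3(\CC)$ is a faithful projective representation of degree $3$, which I claim is irreducible: a reducible one would factor through $\PGL_2(\CC)$ or through an abelian group, neither of which contains $\Alt_6$. Such a representation cannot lift to $\SL_3(\CC)$, for otherwise $\Alt_6$ would have a faithful linear representation of degree $3$, contradicting Lemma~\ref{lem:rep}, under which the smallest faithful one has degree $5$. Hence it corresponds to a nontrivial $3$-torsion class in the Schur multiplier $\ZZ/6$ of $\Alt_6$, i.e.\ to a faithful irreducible $3$-dimensional representation of the triple cover $3.\Alt_6$. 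The plan is then to read off from the character table of $3.\Alt_6$ that there are exactly two such representations, a complex-conjugate pair on which the central $\mumu_3$ acts by scalars; these descend to the two embeddings $\Alt_6=3.\Alt_6/\mumu_3\hookrightarrow\PGL_3(\CC)$ (the Valentiner action and its conjugate). Finally I would verify that these two are interchanged by an outer automorphism of $\Alt_6$, whence up to conjugacy in $\PGL_3(\CC)$ and automorphisms of $\Alt_6$ the embedding is unique.

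The main obstacle is the representation-theoretic input in part~(ii): one has to identify the correct cover as the triple cover $3.\Alt_6$ and establish that it has precisely two faithful $3$-dimensional irreducibles, interchanged by an outer automorphism. The rest --- the Albanese reduction and the classical list of finite subgroups of $\PGL_2$ --- is routine.
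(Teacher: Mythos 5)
The paper does not prove this lemma at all: it is stated as a known result and the ``proof'' is the single line ``See e.g.\ \cite{Dolgachev-Iskovskikh}.'' So your reconstruction is necessarily a different route --- you are supplying the classical argument that the paper outsources to the reference. Both halves of your plan are sound and are essentially the standard proofs. For (i), the Albanese reduction $1\to\PGL_2(K)\to\Bir(S)\to\Aut(E)$ with $K=\CC(E)$, followed by simplicity of $\Alt_6$ versus solvability of $\Aut(E)$ and the Klein list of finite subgroups of $\PGL_2$ over a field of characteristic zero, is exactly how one proves this. For (ii), the reduction to faithful three-dimensional representations of the triple cover $3.\Alt_6$ is the right mechanism; the only imprecision is the count: $3.\Alt_6$ has \emph{four} faithful irreducible characters of degree $3$ (two for each nontrivial central character, forming two complex-conjugate pairs), not two, and one must check that the resulting subgroups of $\PGL_3(\CC)$ are identified under the combined action of conjugation in $\PGL_3(\CC)$ and $\Aut(\Alt_6)$ --- which is precisely the character-table verification you flag as the remaining work. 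Also note that the central $\mumu_3$ acts by scalars on \emph{every} irreducible representation by Schur's lemma, so that clause does not single anything out. What the paper's citation buys is brevity; what your argument buys is a self-contained proof whose only external inputs are the finite subgroups of $\PGL_2$ and the ATLAS data for $3.\Alt_6$.
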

The image of $\Alt_6\hookrightarrow \PGL_3(\CC)$ whose image is called the \textit{Valentiner group} (see \cite{Dolgachev-Iskovskikh} and references therein). 

\begin{lemma}[{\cite[Lemma B.2]{Prokhorov2014x}}]
\label{lemma-fixed-point}
Let $X$ be a threefold with at worst terminal singularities
such that $\Aut(X)$ has a subgroup $G$ isomorphic to $\Alt_6$. Then $X$ contains no $G$-invariant points.
\end{lemma}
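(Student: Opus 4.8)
The plan is to argue by contradiction: suppose $G\simeq\Alt_6$ fixes a point $P\in X$ and extract from this a faithful linear representation of $G$, or of a suitable central extension of $G$, on a space that is too small to carry one. First I would record the general principle that a finite group fixing a (possibly singular) point $P$ acts on the Zariski tangent space $T_{X,P}=(\mathfrak m_P/\mathfrak m_P^2)^*$, and that this linear action is \emph{faithful}: an element acting trivially on $\mathfrak m_P/\mathfrak m_P^2$ can be linearized to act trivially on the formal neighbourhood, hence on all of $X$, so it is the identity. Thus a $G$-fixed point yields a faithful representation of $G$ of dimension $\dim T_{X,P}$.

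Next I would split according to the Gorenstein index $r$ of $P$. If $r=1$, then $P$ is a Gorenstein terminal, i.e.\ compound Du Val, point; such singularities are hypersurface singularities, so $\dim T_{X,P}\le 4$. But by Lemma~\ref{lem:rep} every nontrivial irreducible representation of $\Alt_6$ has dimension in $\{5,8,9,10\}$, and since $\Alt_6$ is simple a representation is faithful exactly when it contains a nontrivial irreducible summand; hence $\Alt_6$ has no faithful representation of dimension $\le 4$. This contradiction disposes of the Gorenstein case.

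For $r>1$ I would pass to the index-one (canonical) cover $\pi\colon(\tilde X,\tilde P)\to(X,P)$, a cyclic $\mumu_r$-cover that is \'etale in codimension one and has $\tilde P$ again terminal and now Gorenstein, hence compound Du Val, so $\dim T_{\tilde X,\tilde P}\le 4$. Because the cover is canonical it is functorial, so $G$ lifts to a group $\tilde G$ acting on $\tilde X$ and sitting in an extension $1\to\mumu_r\to\tilde G\to\Alt_6\to 1$; as $\Alt_6$ is perfect, the conjugation action on the abelian kernel is trivial and this extension is central. The action of $\tilde G$ on $T_{\tilde X,\tilde P}$ is again faithful, so the derived subgroup $H:=[\tilde G,\tilde G]$, which is a central extension $d.\Alt_6$ of $\Alt_6$ and hence a quotient of the Schur cover (so $d\mid 6$), embeds into $\GL(T_{\tilde X,\tilde P})$ with $\dim\le 4$. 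Since $\Alt_6$ itself admits no such faithful representation, the central subgroup $\mumu_d=Z(H)=H\cap\mumu_r$ must be nontrivial, i.e.\ $d\ge 2$.

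The contradiction then comes from two incompatible demands on the one-dimensional space $\omega_{\tilde X,\tilde P}$. On one hand $H$ is perfect, so the character by which it acts on this line is trivial; in particular $\mumu_d\subseteq H$ acts trivially on $\omega_{\tilde X,\tilde P}$. On the other hand, by the defining property of the index-one cover a generator of $\mumu_r$ acts on $\omega_{\tilde X,\tilde P}$ by a \emph{primitive} $r$-th root of unity, so the generator of the subgroup $\mumu_d\subseteq\mumu_r$ acts by a primitive $d$-th root of unity, which is nontrivial as $d\ge 2$. This completes the argument. The main obstacle is organising the case $r>1$ correctly: one must use that the canonical cover of a terminal point is Gorenstein terminal (hence cDV, giving the crucial bound $\dim T_{\tilde X,\tilde P}\le 4$), that $G$ lifts to a central extension, and that this extension is detected on $\omega_{\tilde X,\tilde P}$ by a primitive character of $\mumu_r$; the representation theory of the covers $d.\Alt_6$ then forces $d\ge 2$ and closes the contradiction.
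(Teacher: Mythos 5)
Your argument is correct, but note that the paper itself offers no proof of this lemma at all: it is quoted from \cite[Lemma~B.2]{Prokhorov2014x}, so the only in-paper material to compare against is the closely analogous technique used in the proof of Proposition~\ref{prop:NG-point}. Your overall skeleton coincides with that technique: a fixed point gives a faithful linear action on the Zariski tangent space, the Gorenstein (cDV, hence hypersurface) case kills $\Alt_6$ outright via $\dim T\le 4$ and Lemma~\ref{lem:rep}, and in the non-Gorenstein case one passes to the index-one cover and the central extension $1\to\mumu_r\to\tilde G\to\Alt_6\to 1$, exactly as in \eqref{eq-seq}. Where you genuinely diverge is in how you dispose of the nontrivial central extensions: the paper's method (in Proposition~\ref{prop:NG-point}) leans on the explicit classification of the $\mumu_r$-action on $T$ in the forms (a)--(c) and then argues representation-theoretically case by case; you instead observe that the last term of the story is forced by the one-dimensional space $\omega_{\tilde X,\tilde P}$, on which the perfect group $H=[\tilde G,\tilde G]$ must act trivially while the kernel $\mumu_d=H\cap\mumu_r$ must act through a primitive character. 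This is a real improvement in economy: it handles uniformly the dangerous covers such as $3.\Alt_6\subset\SL_3(\CC)$ (the lift of the Valentiner group), which do have faithful representations of dimension $\le 4$ and so are not excluded by the dimension count alone, without ever invoking the list of terminal singularities beyond the facts that the cover is a hypersurface singularity and that the Galois group acts on $\omega_{\tilde X,\tilde P}$ by a primitive $r$-th root of unity. The one step you should make explicit is why $H=[\tilde G,\tilde G]$ is perfect (your Schur-cover claim needs this): since $\tilde G/H$ is abelian and $\tilde G/(H\mumu_r)$ is a quotient of $\Alt_6^{\mathrm{ab}}=1$, one has $\tilde G=H\cdot\mumu_r$ with $\mumu_r$ central, whence $[\tilde G,\tilde G]=[H,H]=H$. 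With that sentence added, the proof is complete.
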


\section{Main reduction}
\label{main-reduction}

The following assertion is well known (see e.g. \cite[\S~14]{P:G-MMP})
\begin{proposition}
\label{thm:regularization}
Let $Y$ be a rationally connected variety and let $G\subset \Bir(Y)$ be a finite subgroup.
Then there there exists 
a $G$-equivariant birational map $Y\dashrightarrow X$, where $X$ is a projective variety having a $G$-Mori fiber space structure $f: X\to Z$.
\end{proposition}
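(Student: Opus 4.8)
The plan is to reduce the birational $G$-action to a biregular one on a smooth projective model and then to run the equivariant Minimal Model Program. The argument splits into two steps: first, a regularization of the $G$-action, producing a smooth projective variety $\tilde Y$ with a faithful biregular $G$-action and a $G$-equivariant birational map $\tilde Y\dashrightarrow Y$; second, the equivariant MMP applied to $\tilde Y$, whose output is forced to be a Mori fiber space because $Y$, and hence $\tilde Y$, is rationally connected.

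For the regularization step, choose any projective model $Y_0$ of the function field $\CC(Y)$, so that $G$ acts on $Y_0$ by birational self-maps. Consider the rational map
\[
\phi\colon Y_0\dashrightarrow \prod_{g\in G}Y_0,\qquad \phi(y)=\bigl(g\cdot y\bigr)_{g\in G},
\]
and let $W$ be the closure of its image. Letting $G$ act on the product by $h\cdot(y_g)_{g\in G}=(y_{gh})_{g\in G}$, one checks that $W$ is $G$-invariant and that the projection $\pi_e\colon W\to Y_0$ onto the factor indexed by the identity $e\in G$ is birational and intertwines the regular action on $W$ with the birational action on $Y_0$. Thus $W$ is a projective model of $\CC(Y)$ carrying a faithful biregular $G$-action. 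Applying $G$-equivariant resolution of singularities to $W$ then yields the desired smooth projective $\tilde Y$.

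For the MMP step, recall that rational connectedness is a birational invariant, so $\tilde Y$ is rationally connected; being smooth, it is in particular terminal and $G\QQ$-factorial, and $\kappa(\tilde Y)=-\infty$. Running the $G$-equivariant MMP on $\tilde Y$ in the sense of \cite{P:G-MMP} produces a sequence of $G$-equivariant divisorial contractions and flips, each step preserving terminality and $G\QQ$-factoriality. Since $\kappa(\tilde Y)=-\infty$, the program can never terminate with a $G$-minimal model; hence its final output is a $G$-Mori fiber space $f\colon X\to Z$, which is exactly what we want.

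I expect the regularization step to be entirely routine, so that the genuine content lies in the equivariant MMP invoked in the second step --- namely the existence of extremal contractions and of flips, and the termination of flips, in the equivariant setting. In the dimensions relevant here these are part of the established minimal model program and are precisely the results assembled in \cite{P:G-MMP}; granting them, the rational connectedness of $Y$ makes the Mori fiber space conclusion automatic, which is why the statement can be regarded as well known.
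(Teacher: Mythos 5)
Your proposal is correct and is exactly the standard argument (graph/product regularization plus equivariant resolution, then the $G$-equivariant MMP) that the paper delegates to the cited reference \cite[\S~14]{P:G-MMP} with the remark that the assertion is well known. The only point worth tightening is the final step: what forces the MMP to end with a Mori fiber space is that $K_{\tilde Y}$ is not pseudoeffective (which follows from uniruledness of the rationally connected $\tilde Y$), rather than $\kappa(\tilde Y)=-\infty$ per se; in the three-dimensional setting actually used in the paper the two formulations are interchangeable.
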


\begin{proposition}[cf. {\cite[\S~4.2]{P:JAG:simple}}]
\label{thm:MFS}
Let $G=\Sym_{N}$ with $N\ge 6$ and let $f: X\to Z$ be a $G$-Mori fiber space,
where $X$ is rationally connected threefold.
Then $Z$ is a point, i.e. 
$X$ is a $G\QQ$-Fano threefold.
\end{proposition}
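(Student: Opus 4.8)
The plan is to run through the three possibilities $\dim Z\in\{0,1,2\}$ and to rule out $\dim Z=1$ and $\dim Z=2$. Since $f$ is $G$-equivariant, $G=\Sym_N$ acts on $Z$; write $G_0:=\ker(G\to\Aut(Z))$. Because $N\ge 6$, the only normal subgroups of $\Sym_N$ are $1$, $\Alt_N$ and $\Sym_N$, so $G_0\in\{1,\Alt_N,\Sym_N\}$. The subgroup $G_0$ preserves every fiber of $f$, and a spreading-out argument shows it acts faithfully on a general closed fiber: for each $g\ne 1$ the fixed locus $X^g\subsetneq X$ is closed, hence contains fibers only over finitely many points of $Z$, and a general fiber escapes all of these loci simultaneously.

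For $\dim Z=2$ the morphism $f$ is a conic bundle. Here $Z$ is a normal rationally connected, hence rational, surface, so $\Aut(Z)\subset\Bir(Z)=\Cr_2(\CC)$, while a general fiber is $\PP^1$, giving $G_0\hookrightarrow\PGL_2(\CC)$. Since every finite subgroup of $\PGL_2(\CC)$ is cyclic, dihedral, $\Alt_4$, $\Sym_4$ or $\Alt_5$, and both $\Alt_N$ and $\Sym_N$ contain $\Alt_6$, this forces $G_0=1$. Then $G=\Sym_N\hookrightarrow\Cr_2(\CC)$, which is impossible because $\Sym_6\subset\Sym_N$ and $\Sym_6\not\hookrightarrow\Cr_2(\CC)$ by Lemma~\ref{rat-surf}.

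For $\dim Z=1$ the base is $Z=\PP^1$ and $f$ is a del Pezzo fibration, with $G/G_0\hookrightarrow\PGL_2(\CC)$. If $G_0=1$ then $\Sym_N\hookrightarrow\PGL_2(\CC)$, impossible for $N\ge 5$. If $G_0=\Sym_N$, or $G_0=\Alt_N$ with $N\ge 7$, then $G_0$ acts faithfully on a general del Pezzo fiber $S_z$, so $G_0\hookrightarrow\Aut(S_z)\subset\Cr_2(\CC)$; as $G_0$ then contains $\Sym_6$ (respectively $\Alt_7$), this again contradicts Lemma~\ref{rat-surf}. The only case that survives is $N=6$ with $G_0=\Alt_6$, and this is where the real work lies.

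In that remaining case $\mumu_2=\Sym_6/\Alt_6$ acts on $Z=\PP^1$ as a nontrivial involution and so fixes two points $z_0,z_1$. Since $\Alt_6$ fixes $Z$ pointwise, the whole group $\Sym_6$ stabilizes the fiber $F_0=f^{-1}(z_0)$ and acts on it. By Lemma~\ref{lemma-fixed-point}, $X$ carries no $\Alt_6$-fixed point, so $\Alt_6$ cannot fix $F_0$ (nor any component of its reduction) pointwise; as $\Alt_6$ is simple, this forces $\Alt_6$, and hence $\Sym_6$, to act faithfully on $F_0$. The goal is then to turn this into an embedding $\Sym_6\hookrightarrow\Cr_2(\CC)$, contradicting Lemma~\ref{rat-surf}: the general fiber carries a faithful $\Alt_6$-action, so by the classification of automorphisms of del Pezzo surfaces (cf.\ Lemma~\ref{rat-surfA6}) it must be $\PP^2$ with the Valentiner action, and whenever $F_0\cong\PP^2$ one obtains $\Sym_6\subset\PGL_3(\CC)=\Aut(\PP^2)\subset\Cr_2(\CC)$, which is excluded. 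I expect the main obstacle to be precisely the control of the invariant fiber $F_0$: a priori it may be singular, reducible or non-reduced, so the crux is to show that the faithful $\Sym_6$-action on $F_0$ still descends to a faithful action on an irreducible rational surface (for instance that $F_0$ is in fact $\PP^2$, or that an equivariant resolution of a suitable component still carries all of $\Sym_6$). Combining the geometry of degree-$9$ del Pezzo fibrations with the no-fixed-point statement of Lemma~\ref{lemma-fixed-point} should close this last gap.
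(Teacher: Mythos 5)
Your reduction is sound as far as it goes, and your treatment of the conic bundle case and of the ``easy'' subcases of the del Pezzo fibration (via faithfulness of the kernel on a general closed fiber) matches the spirit of the paper's argument. But the proof is not complete: the one case you isolate as ``where the real work lies'' --- $N=6$, $G_0=\Alt_6$ acting trivially on $Z=\PP^1$, with the full $\Sym_6$ acting on the scheme fiber $F_0$ over a fixed point of the residual involution --- is exactly the case you do not close. You correctly observe that $F_0$ may a priori be singular, reducible or non-reduced, so that the faithful $\Sym_6$-action on $F_0$ does not obviously yield an embedding into $\Cr_2(\CC)$ or $\PGL_3(\CC)$; knowing that the \emph{general} fiber is $\PP^2$ with the Valentiner action says nothing directly about the special fiber of a Mori fiber space. ``Should close this last gap'' is a hope, not an argument, and this is the only genuinely nontrivial point of the whole proposition.

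The paper closes precisely this gap by a different mechanism: it takes the scheme fiber $F$ over a fixed point $o$ of $\Phi(G)$ on $Z$ and invokes the Shokurov connectedness argument of \cite[Lemma~B.5]{Prokhorov2014x} (applied to a suitable lc pair $(X,D)$ with $D$ supported on $F$ and $-(K_X+D)$ ample over a neighbourhood of $o$) to show that the minimal $G$-invariant lc centre is a point, a smooth rational curve, or forces $F\simeq\PP^2$; the first two are killed by Lemma~\ref{lemma-fixed-point} and by $\Alt_6\not\subset\PGL_2(\CC)$, so $F\simeq\PP^2$. Then, since $\Sym_N$ ($N\ge 6$) has no faithful action on $\PP^2$, the kernel of $G\to\Aut(F)$ contains $\Alt_N$, which therefore fixes $F$ pointwise --- contradicting Lemma~\ref{lemma-fixed-point} again. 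Note that this argument does not need your case division at all: it handles the whole $\dim Z=1$ case uniformly once one knows $\Phi(G)$ is trivial or $\mumu_2$. If you want to complete your write-up you need either to reproduce this connectedness argument or to find a substitute for it; without it the proposition is unproved.
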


\begin{proof}
Assume that $\dim(Z)\ge 1$. Then $f$ is either 
$G\QQ$-del Pezzo fibration and $Z\simeq \PP^1$ or a $G\QQ$-conic bundle
and $Z$ a rational surface (see \cite[\S~10]{P:G-MMP}).
The map $f$ induces a homomorphism 
\[
\Phi: G \longrightarrow \Aut(Z). 
\]
Assume that $\dim(Z)=2$. Then the generic fiber $X_\eta$ of $f$ is a smooth rational curve. 
The kernel of $\Phi$ acts on $X_\eta$ faithfully.
Hence $\ker(\Phi)\not\supset \Alt_6$ and so $\Phi$ is injective. This contradicts Lemma \ref{rat-surf}. 

Thus we may assume that $f$ is a $G\QQ$-del Pezzo fibration.
Again by Lemma~\ref{rat-surf}
the image of $\Phi$ is either trivial or 
a cyclic group of order two. In both cases $\Phi(G)$ has a fixed point $o\in Z$.
Let $F$ be the scheme fiber over $o$.
Then the proof of \cite[Lemma B.5]{Prokhorov2014x} 
works without any changes. Thus $F\simeq \PP^2$.
Since $\Sym_N$, for $N\ge 6$, does not act faithfully on $\PP^2$,
the alternating subgroup $\Alt_N$ acts on $F$ trivially.
This again contradicts Lemma \ref{lemma-fixed-point}.
\end{proof}

\begin{corollary}
\label{cor:MFS}
Let $Y$ be a rationally connected algebraic threefold. Suppose that $\Bir(Y)$ contains a subgroup $G\simeq \Sym_{N}$ with $N\ge 6$.
Then there exists 
a $G$-equivariant birational map $Y\dashrightarrow X$, where $X$ is a $G\QQ$-Fano threefold.
\end{corollary}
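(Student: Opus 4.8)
The plan is to deduce this corollary as a direct concatenation of the two preceding propositions, with only one routine verification in between. First I would invoke Proposition~\ref{thm:regularization}: since $Y$ is a rationally connected variety and $G\simeq\Sym_N\subset\Bir(Y)$ is a finite subgroup, there exists a $G$-equivariant birational map $Y\dashrightarrow X$ with $X$ a projective variety carrying a $G$-Mori fiber space structure $f\colon X\to Z$. This produces the candidate target $X$ and the fibration $f$ to which the second proposition will be applied.

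Next I would confirm that the hypotheses of Proposition~\ref{thm:MFS} are met for $f\colon X\to Z$. The only thing to check is that $X$ is again a rationally connected threefold. Since $X$ is birational to $Y$, the dimension is preserved, so $\dim X=3$; and rational connectedness is a birational invariant of smooth projective (hence, after resolution, of arbitrary projective) varieties, so $X$ inherits rational connectedness from $Y$. Thus $f\colon X\to Z$ is a $G$-Mori fiber space with $G=\Sym_N$, $N\ge 6$, and $X$ a rationally connected threefold, exactly as required by Proposition~\ref{thm:MFS}.

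Finally, applying Proposition~\ref{thm:MFS} forces $Z$ to be a point, which by definition means that $X$ is a $G\QQ$-Fano threefold. Composing with the birational map of the first step yields the desired $G$-equivariant birational map $Y\dashrightarrow X$ onto a $G\QQ$-Fano threefold. There is no substantial obstacle here: the corollary is a formal consequence of the regularization statement followed by the structural dichotomy of Proposition~\ref{thm:MFS}, and the sole nontrivial input, namely the birational invariance of rational connectedness, is standard.
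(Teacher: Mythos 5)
Your argument is correct and is exactly the paper's intended derivation: the corollary is obtained by concatenating Proposition~\ref{thm:regularization} with Proposition~\ref{thm:MFS}, just as the paper itself does in the proof of Proposition~\ref{main2}. The only added remark, the birational invariance of rational connectedness ensuring the hypotheses of Proposition~\ref{thm:MFS} hold for $X$, is standard and correctly handled.
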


\section{Non-Gorenstein Fano threefolds}

The goal of this section is to prove the following result.
\begin{proposition}[cf. {\cite[Lemma~6.1]{P:JAG:simple}}]
\label{prop:NG-point}
Let
$X$ be a non-Gorenstein Fano threefold 
with terminal singularities such that $\Aut(X)\supset G\simeq \Sym_6$.
Let $\Omega$ be the set of all non-Gorenstein points.
Then any point 
$P\in \Omega$ is a cyclic quotient singularity of type 
$\frac12(1,1,1)$, 
the action of $G$ on $\Omega$ is transitive, and one of the following holds:
\begin{enumerate}
\item\label{prop:NG-point12}
$|\Omega|=12$, $G_P\simeq \Alt_5$,

\item\label{prop:NG-point15}
$|\Omega|=15$, $G_P\simeq \Sym_4\times \Sym_2$.
\end{enumerate}
\end{proposition}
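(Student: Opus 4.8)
The plan is to study the finite $G$-set $\Omega$ orbit by orbit, to read off the local type of each non-Gorenstein point from the representation of its stabilizer on the index-one cover, and then to close up using a global bound on the number of such points. Since terminal threefold singularities are isolated, $\Omega$ is finite and $G$-invariant; decompose it into $G$-orbits. By Corollary~\ref{cor:f-p} the stabilizer $G_P$ of a point lying in an orbit of size $\le 16$ is one of $\Sym_6,\Alt_6,\Sym_5,\Alt_5,(\Sym_3\times\Sym_3)\rtimes\mumu_2,\Sym_4\times\Sym_2$. A non-Gorenstein point is in particular singular, so Lemma~\ref{lemma-fixed-point} forbids any $\Alt_6$-invariant point, ruling out the orbits of sizes $1$ and $2$ (stabilizers $\Sym_6$ and $\Alt_6$). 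Kawamata's boundedness estimate for terminal $\QQ$-Fano threefolds gives $\sum_{P\in\Omega}(r_P-\tfrac1{r_P})<24$, where $r_P\ge 2$ is the local index; hence $\tfrac32|\Omega|<24$ and $|\Omega|\le 15$. Combined with the previous step, every orbit then has size in $\{6,10,12,15\}$, with stabilizer $\Sym_5$, $(\Sym_3\times\Sym_3)\rtimes\mumu_2$, $\Alt_5$, or $\Sym_4\times\Sym_2$ (cases \ref{cor:f-p6}, \ref{cor:f-p10}, \ref{cor:f-p12}, \ref{cor:f-p15}).

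Next I would pin down the singularity type at a point $P\in\Omega$ of index $r=r_P$. Let $\pi\colon (Y,Q)\to(X,P)$ be the index-one (canonical) cover, a cyclic $\mu_r$-cover with $(Y,Q)$ Gorenstein terminal, hence either smooth or compound Du Val. As $\pi$ is canonically attached to the germ it is equivariant, so the $G_P$-action lifts to an extension $1\to\mu_r\to\widetilde{G_P}\to G_P\to 1$ acting on the tangent space $T_QY$, faithfully by Cartan linearization. If $Y$ is smooth, then $\dim T_QY=3$ and $P$ is a cyclic quotient $\tfrac1r(1,-1,b)$; if $Y$ is compound Du Val, then $\dim T_QY=4$ and $Y=\{f=0\}\subset T_QY$ is a $\widetilde{G_P}$-semi-invariant hypersurface. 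In the smooth case the conjugation action $G_P\to\operatorname{Aut}(\mu_r)\cong(\ZZ/r)^\times$ takes values in an abelian group, so the commutator subgroup of $G_P$ centralizes $\mu_r$. For $G_P\supseteq\Alt_5$ this commutator subgroup is the perfect group $\Alt_5$, which acts irreducibly on any faithful $3$-dimensional representation; Schur's lemma then forces $\mu_r$ to act by the scalar $\operatorname{diag}(\zeta,\zeta^{-1},\zeta^b)$, whence $r=2$ and the type is $\tfrac12(1,1,1)$. For $G_P\cong\Sym_4\times\Sym_2$ the same conclusion follows from a direct analysis of the $\mu_r$-eigenline structure with respect to the normal Klein four-subgroup and the $3$-cycle permuting its eigenlines.

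To eliminate the stabilizers $\Sym_5$ and $(\Sym_3\times\Sym_3)\rtimes\mumu_2$ (orbit sizes $6$ and $10$) I would examine which candidate groups actually admit the representations above. Neither $\Sym_5$ nor the order-$72$ group has a faithful $3$-dimensional projective representation, and no central $\mu_2$-extension of either has a faithful $3$-dimensional linear representation with the central involution acting as $-\mathrm{id}$; this excludes the smooth-cover case for these two groups. By contrast $\Alt_5$ and $\Sym_4\times\Sym_2$ do occur: the subgroups $\mu_2\times\Alt_5\subset\GL_3$ (icosahedral) and $\langle\Sym_4,\sqrt{-1}\,\mathrm{id}\rangle\subset\GL_3$ (with $\Sym_4$ the standard rotation representation) both contain $-\mathrm{id}$ with quotients $\Alt_5$ and $\Sym_4\times\Sym_2$, realizing $\tfrac12(1,1,1)$ germs with exactly these stabilizers.

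The main obstacle is the compound Du Val case $\dim T_QY=4$: there $\Sym_5$ (standard $4$-dimensional representation) and the order-$72$ group do have faithful $4$-dimensional representations, so they cannot be dismissed purely on representation-theoretic grounds. Here I would invoke Reid's classification of the non-cyclic terminal quotients (types cA/r, cAx/2, cAx/4, cD/2, cD/3, cE/2): the $\mu_r$-weights on $T_QY$ together with the weight of the defining semi-invariant $f$ are severely constrained, and the task is to check that for each of the four candidate stabilizers no $\widetilde{G_P}$-invariant isolated cDV equation is compatible with these weight conditions, so that the cover $Y$ is always forced to be smooth and the previous analysis applies. Granting this, every $P\in\Omega$ is of type $\tfrac12(1,1,1)$ with $G_P\cong\Alt_5$ or $\Sym_4\times\Sym_2$; each such point contributes $3/2$ to Kawamata's sum, so $|\Omega|\le 15$, and since any two of the surviving orbit sizes $12$ and $15$ already exceed $15$, the set $\Omega$ consists of a single orbit. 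This yields transitivity together with the two cases \ref{prop:NG-point12} and \ref{prop:NG-point15}.
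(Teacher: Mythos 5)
Your outline follows the same general strategy as the paper (index-one cover, representation of the extended stabilizer on the tangent space, Corollary~\ref{cor:f-p} for the orbit structure, the bound $\sum(r_Q-1/r_Q)\le 24$), but it leaves the decisive step unproved. Everything in your last two paragraphs hinges on the compound Du Val case, which you explicitly defer (``the task is to check\dots'', ``Granting this\dots''): in that case the stabilizers $\Sym_5$ and the order-$72$ group \emph{do} have faithful $4$-dimensional representations, so your representation-theoretic exclusion collapses exactly where it is needed, and the proposed remedy --- classifying $G_P^\sharp$-invariant isolated cDV equations for each of the four candidate stabilizers and each local type in Reid's list --- is a substantial piece of work that you have not carried out. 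The paper avoids this entirely by two devices. First, it observes that the subspace $T'\subset T_{P^\sharp,X^\sharp}$ spanned by the eigenspaces on which $\mumu_r$ acts faithfully is $3$-dimensional and $G_P^\sharp$-invariant in \emph{every} case of Reid's classification, so the $3$-dimensional representation theory applies uniformly, with no need to distinguish smooth covers from cDV covers. Second, it uses the basket: if $(X\ni P)$ is not a cyclic quotient then $\B(X,P)$ contains at least two points of index $r$, so once every orbit has length $\ge 10$ and $r=2$, a non-cyclic-quotient point would force $\sum(r_Q-1/r_Q)\ge 30>24$. The cDV possibility is thus killed numerically, not by analyzing equations.

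Two smaller issues. Your exclusion of $\Sym_5$ and of $(\Sym_3\times\Sym_3)\rtimes\mumu_2$ rests on asserted facts (``no faithful $3$-dimensional projective representation'', ``no central $\mumu_2$-extension with $-\mathrm{id}$ central'') that are not obvious for the order-$72$ group and are not proved; the paper instead gives an explicit argument via $H^\sharp=G_P^\sharp\cap\SL(T')$, showing the quotient of $H$ by its normal Sylow $3$-subgroup has order $4$ and so cannot permute the three $\Syl_3$-eigenlines transitively. And your Schur-lemma step for $G_P\supseteq\Alt_5$ is applied to $\Alt_5$ rather than to its preimage in $G_P^\sharp$, which could be $2.\Alt_5\simeq\SL_2(\bF_5)$ acting reducibly as $1\oplus 2$ with the central involution equal to $\operatorname{diag}(1,-1,-1)$; that configuration is ruled out by the terminality condition $\gcd(a,r)=1$ in the weights $\frac1r(1,-1,a)$, not by irreducibility, so the argument needs to be restated. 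The overall architecture is sound and close to the paper's, but as written the proof is incomplete.
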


\begin{proof}
Fix a non-Gorenstein point $P\in X$ 
and let $P_1=P,\dots, P_{k}$ be its orbit.
Let $r\ge 2$ be the~Gorenstein index of $(X\ni P)$, let
\[
\pi\colon (X^{\sharp}, P^{\sharp})\longrightarrow (X,P)
\]
be the~index one
cover (see~\cite[Sect.~3.5]{Reid:YPG}), where $P^{\sharp}=
\pi^{-1}(P)$. Then $\pi$ 
is the~ topological universal cover (of degree $r$). Thus, there
is an exact~sequence of groups
\begin{equation}
\label{eq-seq}
1
\longrightarrow
M
\stackrel{\alpha}\longrightarrow
G_P^\sharp
\stackrel{\beta}\longrightarrow
G_P
\longrightarrow
1,
\end{equation} 
where $M\simeq \mumu_r$ and $G_P^{\sharp}$ is a~finite subgroup in
$\mathrm{Aut}(X^{\sharp}, P^{\sharp})$. 
The group $G_P^{\sharp}$ faithfully acts on the~Zariski tangent
space $T:=T_{P^{\sharp}, X^{\sharp}}$ to $X^{\sharp}$ at the~point
$P^{\sharp}$.
Recall that $(X^{\sharp}, P^{\sharp})$ is a~hypersurface
singularity \cite{Reid:YPG}. Hence, we have
$\dim(T)\le 4$. By the~classification of three-dimensional terminal singularities
(see~\cite[Sect.~6.1]{Reid:YPG}) the~action of the~group
$\alpha(M)$ on $T$ in some
coordinate system has one of the~following forms:
\begin{enumerate}
\renewcommand\labelenumi{\rm (\alph{enumi})}
\renewcommand\theenumi{\rm (\alph{enumi})}
\item 
\label{cases:term3}
$\dim(T)=3$, $(x_1,x_2,x_3)\longmapsto (\zeta_r x_1,\zeta_r^{-1} x_2,\zeta_r^{a} x_3)$,
\item 
\label{cases:term4}
$\dim(T)=4$, $(x_1,x_2,x_3,x_4)\longmapsto (\zeta_r x_1,\zeta_r^{-1} x_2,\zeta_r^{a} x_3, x_4)$,
\item 
\label{cases:term4e}
$\dim(T)=4$, $(x_1,x_2,x_3, x_4)\longmapsto (\zeta_4 x_1,-\zeta_4 x_2, \zeta_4 x_3, - x_4)$, $r=4$,
\end{enumerate}
where $\zeta_r$ is a~primitive $r$-th root of unity and $\gcd
(r,a)=1$. 

Denote by $T'\subset T$ the  subspace generated by the 
$M$-eigenspaces on which $M$ acts faithfully.
Thus $\dim(T')=3$ and $T'=T$ in the case~\ref{cases:term3}. The subspace $T'$ is $G_P^\sharp$-invariant and so
$G_P^\sharp\subset \GL(T')=\GL_3(\CC)$.

Recall that we can associate with $(X\ni P)$ a basket $\B(X,P)$, that is, a finite collection of cyclic quotient terminal
singularities $(X_\alpha\ni P_\alpha)$ \cite[Sect.~6.1 and Theorem 9.1 (III)]{Reid:YPG}.
Moreover, $\B(X,P)=\{(X\ni P)\}$ if and only if $(X\ni P)$ is a cyclic quotient
singularity (case~\ref{cases:term3}). In the case~\ref{cases:term4} all the singularities in $\B(X,P)$ are of index $r$.
In the case~\ref{cases:term4e} the basket $\B(X,P)$ contains a point of index $4$ and at least one point of index $2$.
Denote 
\[
\B(X):= \bigcup_{P\in X} \B(X,P). 
\]
According to  \cite{KMMT-2000} 
we have 
\begin{equation}
\label{eq:Kb}
\sum_{Q\in \B(X)} \left(r_Q-\frac 1{r_Q}\right)\le 24. 
\end{equation} 
In particular, $k\le |\Omega|\le 16$. 

We use Corollary \ref{cor:f-p}.
First we consider the case \ref{cor:f-p}.\ref{cor:f-p2}, i.e. $G\simeq \Alt_6$.
Since $\Alt_6$ 
has no non-trivial three-dimensional representations of dimension $\le 4$
(see Lemma~\ref{lem:rep}), 
the sequence \eqref{eq-seq} does not split. 
In particular, the representation $G_P^\sharp\hookrightarrow \GL(T')$ is irreducible (otherwise the restriction to a two-dimensional $G_P^\sharp$-invariant subspace $T_2\subset T'$ would be a faithful representation of $G_P^\sharp$).
Then $M$ acts on $T'$ by scalar multiplications and so $r=2$.
But in this case the determinant map $\det: G_P^\sharp\to \CC^*$ splits the sequence \eqref{eq-seq}.
Cases \ref{cor:f-p}.\ref{cor:f-p1} and \ref{cor:f-p6} are similar. 
Thus $k\ge 10$ and we may assume that any $G$-orbit on $\Omega$ has length 
at least $10$.

Then $r=2$ by \eqref{eq:Kb}. 
Moreover, $P\in X$ is a cyclic quotient singularity and $|\Omega|=k$, i.e. $P=P_1,\dots,P_{k}$
are all non-Gorenstein points of $X$.
We have $T':=T\simeq \CC^3$.
Since any normal subgroup of order $2$ is contained in 
the center, $G_P^\sharp$ is a central extension of $G_P$.
Let $H^\sharp=G_P^\sharp\cap \SL(T')$. Then $H^\sharp$ is a normal subgroup 
of $G_P^\sharp$ and $G_P^\sharp/H^\sharp\simeq \mumu_m$ for some $m$.
Moreover, $H^\sharp$ does not contain $M$
(because $H^\sharp\subset \SL(T')$ and $M$ is a group of order $2$
acting by scalar multiplication). Hence, $m$ is even.
Let $H:=\beta(H^\sharp)$. Then $H\simeq H^\sharp$ and $G_P/H\simeq \mumu_{m/2}$.

Consider the case \ref{cor:f-p}.\ref{cor:f-p10}. Then 
$G_P\simeq (\Sym_3\times \Sym_3) \rtimes \mumu_2$. The commutator subgroup $[G_P,G_P]$ is a group of order $18$ and 
\[
G_P/[G_P,G_P]\simeq \mumu_2\times \mumu_2.
\]
Since $G_P/H$ is cyclic, $H$ either coincides with $G_P$ or $[G_P:H]=2$.
If $H=G_P$, then $G_P^\sharp=H^\sharp \times M\simeq G_P \times \mumu_2$.
On the other hand, $G_P$ has no three-dimensional faithful representation.
Hence, $H$ is an index $2$ subgroup in $G_P$ and $G_P^\sharp/H^\sharp\simeq \mumu_4$.
Note that the center of $H\simeq H^\sharp$ is trivial
and $H$ has no faithful two-dimensional representations.
Hence, the representation of $H$ on $T'$ is irreducible.
Let $\Syl_3(H)$ be the Sylow $3$-subgroup. 
Then $\Syl_3(H)$ is normal in $H$ and $H/\Syl_3(H)$ transitively
permute $\Syl_3(H)$-eigenspaces. This is impossible.
\end{proof}

\begin{corollary}
\label{cor:NG-point7}
Let
$X$ be a Fano threefold 
with terminal singularities such that $\Aut(X)\supset G\simeq \Sym_7$.
Then $X$ is Gorenstein.
\end{corollary}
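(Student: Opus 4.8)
The plan is to leverage the already-established Proposition~\ref{prop:NG-point}, which completely describes the possible non-Gorenstein configurations when $\Sym_6$ acts, and then to show that none of these configurations survive when the acting group is enlarged to $\Sym_7$. The strategy is to argue by contradiction: suppose $X$ is a non-Gorenstein Fano threefold with $\Aut(X)\supset G\simeq\Sym_7$, restrict the action to a subgroup $\Sym_6\subset\Sym_7$, and obtain a constraint on the set $\Omega$ of non-Gorenstein points that is incompatible with the full $\Sym_7$-action.

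More precisely, the non-Gorenstein locus $\Omega$ is intrinsically defined, hence $\Sym_7$-invariant, and by \eqref{eq:Kb} we again have $|\Omega|\le 16$. First I would apply Proposition~\ref{prop:NG-point} to the restricted $\Sym_6$-action: this forces every point of $\Omega$ to be a $\frac12(1,1,1)$ cyclic quotient singularity, and tells us that $\Sym_6$ acts transitively on $\Omega$ with $|\Omega|\in\{12,15\}$. The crucial point is that $\Sym_6$ \emph{already} acts transitively, so $\Omega$ is a single $\Sym_6$-orbit; a fortiori it is a single $\Sym_7$-orbit. Thus $\Sym_7$ acts transitively on a set of size $12$ or $15$, and I must examine whether such transitive actions exist.

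The main step is a point-counting argument on $G=\Sym_7$. A transitive action on $\Omega$ of size $n\in\{12,15\}$ forces $|G_P|=|G|/n=5040/n$, giving a subgroup of $\Sym_7$ of order $420$ (for $n=12$) or $336$ (for $n=15$). The restriction to $\Sym_6$ must recover the stabilizers of Proposition~\ref{prop:NG-point}, namely $\Alt_5$ (order $60$) when $n=12$ and $\Sym_4\times\Sym_2$ (order $48$) when $n=15$. The index of the $\Sym_6$-stabilizer inside the $\Sym_7$-stabilizer should match the orbit structure, but the arithmetic is delicate: for a transitive $\Sym_7$-action of degree $n$, the stabilizer $G_P$ is a subgroup of index $n$, and the orbits of $\Sym_6\subset\Sym_7$ on $\Omega$ are governed by the double cosets $\Sym_6\backslash\Sym_7/G_P$. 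The hard part will be showing that no index-$12$ or index-$15$ subgroup of $\Sym_7$ restricts correctly; this is best handled by enumerating subgroups of the required order, or more cleanly by invoking Proposition~\ref{prop:atlas}\ref{prop:atlasA7} together with the analogous classification of maximal subgroups of $\Sym_7$.

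The cleanest route I would take is representation-theoretic, paralleling the argument inside Proposition~\ref{prop:NG-point}. At each $P\in\Omega$ the index-one cover produces a central extension $G_P^\sharp$ of $G_P$ by $\mumu_2$ embedding faithfully into $\GL_3(\CC)$, so $G_P^\sharp$ has a faithful three-dimensional representation; in particular $G_P$ itself has a projective three-dimensional representation of the appropriate type. For $n=12$ we would need a subgroup of $\Sym_7$ of order $420$ whose relevant quotient admits such a representation, and for $n=15$ a subgroup of order $336$. Checking against the subgroup structure of $\Sym_7$ and the dimension constraints of Lemma~\ref{lem:rep}-type results rules out both. Since every possibility from Proposition~\ref{prop:NG-point} is thereby excluded, $\Omega$ must be empty, i.e.\ $X$ is Gorenstein, completing the proof.
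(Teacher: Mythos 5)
Your proposal follows essentially the same route as the paper: restrict to $\Sym_6\subset\Sym_7$, apply Proposition~\ref{prop:NG-point} to get a transitive action on a set $\Omega$ of size $12$ or $15$, and then observe that $\Sym_7$ has no subgroup of index $12$ or $15$ --- which is precisely the one fact the paper's proof records. Your closing representation-theoretic detour is unnecessary (the required subgroups of order $420$ or $336$ simply do not exist, which your first, group-theoretic route already establishes), but the argument is correct as it stands.
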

\begin{proof}
Easily follows from Proposition~\ref{prop:NG-point}.
We have to note only that $\Sym_7$ contains no subgroups
of index $12$ or $15$, hence $\Sym_7$ cannot act on the set 
$\Omega$ as in Proposition~\ref{prop:NG-point}.
\end{proof}

\begin{proof}[Proof of Proposition~\xref{main2}]
Let $G=\Sym_6$ be a subgroup in $\Bir(Y)$, where $Y$ is a rationally connected threefold.
By Proposition~\ref{thm:regularization} there exists an equivariant birational map $Y\dashrightarrow X$, where $X$ is a variety having a $G$-Mori fiber space structure $X\to Z$.
By Proposition~\ref{thm:MFS} the base $Z$ is a point, i.e. $X$ is a $G\QQ$-Fano threefold. Assume that $X$ is not Gorenstein.
By Proposition~\ref{prop:NG-point} we have only one of the cases
\ref{prop:NG-point}\ref{prop:NG-point12} or \ref{prop:NG-point}\ref{prop:NG-point15}. It remains to prove the last assertion of \ref{main2}\ref{main2b}.
For this, we apply 
the orbifold Riemann-Roch \cite{Reid:YPG}: 
\[
g+1=\dim |-K_X|= \frac12(-K_X)^3 -\frac14k + 2.
\]
Hence, $-K_X^3=2g-2+k/2$, where $k=12$ or $15$.
\end{proof}

\section{Proof of Proposition~\ref{main0}}
In this section we prove Proposition~\ref{main0}.
Note that the proof also follows from \cite[Theorem~4.3]{BCDP:QS}.
Let $X$ be a rationally connected variety such that $\Bir(X)\supset G\simeq \Sym_n$, $n\ge 7$. Let $G_0:=\Alt_n\subset G$.
By \cite[Theorem~1.5]{P:JAG:simple} we have $n=7$.
By Proposition~\ref{thm:regularization} we may assume that $X$ is a variety having a $G$-Mori fiber space structure $X\to Z$.
By Proposition~\ref{thm:MFS} the base $Z$ is a point, i.e. $X$ is a $G\QQ$-Fano threefold and by Corollary~\ref{cor:NG-point7} the singularities of $X$ are Gorenstein.

\begin{lemma}
\label{lemma:X2g-2}
The linear system $|-K_X|$
is very ample and defines an embedding 
\[
X=X_{2g-2}\subset \PP\left( H^0(X,-K_X)^\vee\right)=\PP^{g+1}.
\]
If $g\ge 5$, then the image $X_{2g-2}$ is an intersection 
of quadrics in $\PP^{g+1}$.
\end{lemma}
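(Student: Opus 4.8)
The plan is to reduce the whole statement to the geometry of a general anticanonical member $S\in|-K_X|$ and then use the $G=\Sym_7$-symmetry to discard the finitely many exceptional configurations. First I would record that, since $G$ acts on $X$ by automorphisms, the anticanonical sheaf carries a canonical $G$-linearization, so $V:=H^0(X,-K_X)$ is a genuine (not merely projective) $G$-representation of dimension $g+2$, and the anticanonical rational map $\varphi\colon X\dashrightarrow \PP(V^\vee)=\PP^{g+1}$ is $G$-equivariant. Because $-K_X$ is ample, the kernel of $G\to\GL(V)$ is a normal subgroup of $\Sym_7$ acting trivially on an ample class, hence trivial; thus $V$ is a faithful $\Sym_7$-module, and since the minimal faithful representation of $\Sym_7$ is the $6$-dimensional standard one, we get $g+2\ge 6$, i.e. $g\ge 4$. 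Finally, as $X$ has terminal Gorenstein singularities, a general $S\in|-K_X|$ is (by Shokurov's theorem, in its form for the Gorenstein case) a $K3$ surface with at worst Du Val singularities, and $L:=-K_X|_S$ is an ample class with $L^2=(-K_X)^3=2g-2$.

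For base-point-freeness I would use that on a Gorenstein Fano threefold with canonical singularities the locus $\Bs|-K_X|$ is either empty or a single point. A base point would be $G$-invariant, so it would be an $\Alt_6$-fixed point, contradicting Lemma~\ref{lemma-fixed-point}; hence $\Bs|-K_X|=\emptyset$ and $\varphi$ is a morphism. For very ampleness I would invoke the classification of anticanonical morphisms of Gorenstein Fano threefolds: $\varphi$ is either a closed embedding or a double cover of a threefold $\bar X\subset\PP^{g+1}$ of minimal degree $g-1$ (the hyperelliptic case). The Galois involution of such a double cover is canonical, hence commutes with $G$, so $G$ acts on the rational threefold $\bar X$. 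A threefold of minimal degree is either a rational normal scroll (or a cone over one), carrying a $G$-stable ruling $\bar X\to\PP^1$, or a cone over the Veronese surface; in the first case the induced $G\to\Aut(\PP^1)=\PGL_2(\CC)$ has image of order $\le 2$ (the only such quotients of $\Sym_7$), so its kernel contains $\Alt_7$ and acts fibrewise on the rational-surface fibres, yielding $\Alt_7\hookrightarrow\Cr_2(\CC)$, contrary to Lemma~\ref{rat-surf}; in the Veronese-cone case one is forced into $\Alt_7\subset\PGL_3(\CC)$, which is impossible by representation theory of $\Sym_7$. Hence $\varphi$ is an embedding and $X=X_{2g-2}\subset\PP^{g+1}$.

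For the last assertion assume $g\ge 5$. Restricting to $S$ and using the vanishing $H^1(X,-mK_X)=0$, one reduces projective normality and quadric-generation for $X$ to the same properties for the $K3$ section $S\subset\PP^{g}$. By Saint-Donat's theorem the polarized $K3$ surface $(S,L)$ is projectively normal and cut out by quadrics unless it is hyperelliptic, trigonal, or a plane quintic. The hyperelliptic case is already excluded; in each of the remaining exceptional cases $X$ acquires an intrinsic, hence $G$-stable, auxiliary structure — a rational normal scroll $W\supset X$ realized as the intersection of the quadrics through $X$ in the trigonal case, and an analogous distinguished pencil in the plane-quintic case — from which a $G$-equivariant fibration $X\dashrightarrow\PP^1$ with rational-surface fibres again yields $\Alt_7\hookrightarrow\Cr_2(\CC)$, contradicting Lemma~\ref{rat-surf}. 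Therefore the homogeneous ideal of $X$ is generated by quadrics.

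The main obstacle will be the equivariant exclusion of the hyperelliptic and trigonal types. The delicate point is that $|-K_X|^G$ may be empty (for the target variety $X_6'$ the module $V$ is the irreducible standard representation, with no invariant anticanonical divisor), so one cannot argue on a $G$-fixed $K3$ surface $S$; instead one must show that the exceptional structure is intrinsic to $X$ itself — the double cover and its Galois involution, or the scroll cut out by the quadrics through $X$ — so that it is automatically $G$-stable, and then verify carefully that the resulting fibration over $\PP^1$ genuinely forces $\Alt_7\subset\Cr_2(\CC)$.
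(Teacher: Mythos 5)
The paper offers no argument of its own here---it simply cites \cite[Lemmas~5.3 and 5.4]{P:JAG:simple}---and your proof is essentially a reconstruction of how those lemmas are established: Shin's results on the anticanonical system of Gorenstein canonical Fano threefolds, the Saint-Donat/Iskovskikh description of the hyperelliptic and trigonal exceptions, and equivariant exclusion of each exceptional configuration (which is necessarily intrinsic, hence $G$-stable) via Lemmas~\ref{rat-surf} and~\ref{lemma-fixed-point}. One inaccuracy to fix: Shin's theorem does not assert that $\Bs|-K_X|$ is empty or a single point---it can also be a smooth rational curve (as happens, e.g., for $\PP^1\times S_1$ with $S_1$ a del Pezzo surface of degree one)---but that case is excluded by the same mechanism, since a $G$-invariant $\PP^1$ would force either an embedding $\Alt_7\hookrightarrow\PGL_2(\CC)$ or an $\Alt_7$-fixed point on $X$, both impossible.
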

\begin{proof}
See \cite[Lemmas~5.3 and 5.4]{P:JAG:simple} 
\end{proof}

\begin{lemma}
If $g\le 4$, then $g=4$ and we have the case \xref{main0}\xref{main0b}.
\end{lemma}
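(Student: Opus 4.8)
We are in the situation where $X$ is a Gorenstein $G\QQ$-Fano threefold with $G\simeq\Sym_7$ acting, and we have assumed $g=\g(X)\le 4$. The goal is to rule out $g\le 3$ and to show that $g=4$ forces the variety in \xref{main0}\xref{main0b}. The plan is to exploit the representation theory of $\Sym_7$ together with Lemma~\ref{lem:rep}-type constraints: the space $W:=H^0(X,-K_X)$ is a $\Sym_7$-module of dimension $g+2\le 6$, and its structure is severely limited because $\Sym_7$ has no faithful representation of small dimension.

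\textbf{Low genus cases via representation theory.}
First I would analyze the $G$-action on $W=H^0(X,-K_X)$, which has dimension $g+2\le 6$. Since $X$ is a $G\QQ$-Fano threefold with $\uprho(X)=1$, the group $G=\Sym_7$ acts on $X$ and hence linearly on $W$ (up to a central extension coming from the choice of linearization of $-K_X$, but since $\Sym_7$ has trivial Schur multiplier issues for our purposes one checks the relevant projective representations lift). The key point is that $G$ must act faithfully on $X$, and therefore the induced action on $|-K_X|=\PP(W^\vee)$ cannot have $\Alt_7$ in its kernel; otherwise $\Alt_7$ would act trivially on $W$ and hence fix every point of $X$, contradicting that $X$ is positive-dimensional with an honest $G$-action (one can invoke a fixed-point obstruction analogous to Lemma~\ref{lemma-fixed-point}). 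Thus $W$, as a $G$-representation modulo scalars, involves a nontrivial irreducible constituent of $\Sym_7$. Now the nontrivial irreducible representations of $\Sym_7$ have dimensions in $\{1,6,14,14,15,21,35,20,\dots\}$; the only ones of dimension $\le 6$ are the trivial and sign characters (dimension $1$) and the standard and standard-tensor-sign representations (dimension $6$). Since $\Alt_7$ is not in the kernel, $W$ must contain the $6$-dimensional standard representation as a constituent, forcing $\dim W\ge 6$, i.e. $g+2\ge 6$, so $g\ge 4$. This eliminates $g\le 3$ and shows that when $g=4$ we have $\dim W=6$ and $W$ is exactly the standard $6$-dimensional representation of $\Sym_7$ (possibly twisted by the sign character).

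\textbf{Identifying the $g=4$ variety.}
Once $g=4$, Lemma~\ref{lemma:X2g-2} gives an $\Sym_7$-equivariant embedding $X=X_6\subset\PP^5=\PP(W^\vee)$ with $-K_X^3=2g-2=6$, and since $g=4\ge 5$ fails, I cannot directly cite that $X_6$ is an intersection of quadrics from that lemma; instead I would argue directly. The degree-$6$ anticanonically embedded Fano threefold of genus $4$ in $\PP^5$ is classically a complete intersection of a quadric and a cubic (this is the standard description of the genus-$4$ Fano threefold; see \cite{IP99}). I would identify the $\Sym_7$-invariant quadric and cubic inside $\Sym^2 W^\vee$ and $\Sym^3 W^\vee$. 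Realizing $W$ as the standard representation, i.e. the hyperplane $\sum x_i=0$ in the permutation module $\CC^7$, the invariant quadric is $\sum x_i^2=0$ and the invariant cubic is $\sum x_i^3=0$; together with $\sum x_i=0$ these are exactly the equations \eqref{eq:X6}. To complete the argument I would verify that $\Sym^2W^\vee$ and $\Sym^3W^\vee$ each contain a unique (up to scalar) $\Sym_7$-invariant, so that the defining quadric and cubic are forced, and that the resulting complete intersection is smooth and has the correct invariants.

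\textbf{The main obstacle.}
The delicate point is the uniqueness and rigidity: I must show that the $\Sym_7$-action on $W$ determines $X$ uniquely, i.e. that the invariant quadric and cubic forms are unique up to scalars and that no other $G$-equivariant genus-$4$ Fano threefold occurs. Decomposing $\Sym^2W$ and $\Sym^3W$ into irreducibles and extracting the trivial (or sign-twisted) isotypic components is the crux; one must also handle the possible twist of $W$ by the sign character, checking it yields an equivariantly isomorphic variety. I expect this representation-theoretic bookkeeping, rather than the geometry, to be the main technical burden.
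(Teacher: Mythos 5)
Your proposal is correct and follows essentially the same route as the paper: the absence of nontrivial $\Sym_7$-representations of dimension $\le 5$ forces $g\ge 4$, and for $g=4$ the anticanonical model $X_6\subset\PP^5$ is a quadric--cubic intersection whose equations are pinned down by the uniqueness of the invariants of the standard representation. The only difference is that you explicitly flag the degree-$3$ invariant bookkeeping (which is easy: the degree-$2$ and degree-$3$ invariants of the standard representation are each one-dimensional, and there are no sign-semiinvariant cubics) as remaining work, whereas the paper silently absorbs it after noting the uniqueness of the invariant quadric.
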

\begin{proof}
The embedding $X=X_{2g-2}\subset \PP^{g+1}$ is $G$-equivariant and $\PP^{g+1}$ is naturally identified with $\PP(H^0(X,-K_X)^\vee)$.
Since $\Sym_7$ has no non-trivial representations of dimension $\le 5$,
we have $g\ge 4$. If $g=4$, then by \cite[Lemma~5.4]{P:JAG:simple} $X=X_6 \subset \PP^5$ is an intersection of a quadric and a cubic. Here the representation of $G$ on $H^0(X,-K_X)$ is irreducible and there exists exactly one invariant quadric. We obtain the case \xref{main0}\xref{main0b}. This variety $X_6'$ is not rational according to \cite{Beauville2012}.
\end{proof}

Assume that $g = 5$. 
In this case $\dim H^0(X,-K_X)=7$.
The variety $X \subset \PP^6$ is a complete intersection of three quadrics, say $Q_1,\, Q_2,\, Q_3$ (see Lemma~\ref{lemma:X2g-2}). 
They generate a two-dimensional linear system 
(net) $\mathscr{Q}$. 
The induced action of $G_0=\Alt_7$ on $\mathscr{Q}$ is trivial by Lemma~\ref{rat-surf}.
Therefore, all the quadrics in the net $\mathscr{Q}$ are $G_0$-invariant.
The faithful  $G_0$-representation $H^0(X,-K_X)^\vee$
is reducible: 
\[
H^0(X,-K_X)^\vee=W_1\oplus W_6, 
\]
where $\dim (W_1)=1$ and $\dim (W_6)=6$. 
Here $W_6$ is again a (unique) standard irreducible representation
having exactly one invariant quadric, say $Q'$.
On the other hand, 
the surface $\PP(W_6)\cap X$ is a complete intersection of 
(invariant) quadrics $Q_i\cap \PP(W_6)$.
Again this gives a contradiction.

From now on we assume that $g\ge 6$.
Let $G_0:=\Alt_7\subset G$. If $\rr(X)^{G_0}=1$, then by
\cite[Theorem~1.5]{P:JAG:simple} we have $X\simeq \PP^3$. In this case 
the action of $\Sym_7$ on $\PP^3$ lifts to a faithful action of a central extension $\tilde\Sym_7$
of $\Sym_7$ on $H^0(\PP^3,\, \OOO_{\PP^3}(1))$.
Since the Schur multiplier of $\Sym_7$ has order $2$ (see e.g. \cite{atlas}),
we may assume that 
$\tilde\Sym_7$ is either $\Sym_7$ itself or a double cover 
of $\Sym_7$. However such $\tilde\Sym_7$ has no faithful four-dimensional representations, a contradiction.

Thus we assume that $\rr(X)^{G_0}>1$. 
We claim that $X$ contains no planes. 
Indeed, in the case $\uprho(X)=1$ this is the statement of \cite[Theorem 1.1]{P:planes}.
For general case, we just note that the proof of \cite[Theorem 
1.1]{P:planes} works without changes. It uses only that $X$ is a $G$-Fano 
threefold with $g\ge 6$ such that $-K_X$ is very ample and the image 
of the anticanonical map is an intersection of quadrics 
(Lemma~\ref{lemma:X2g-2}). Thus $X$ contains no planes. 

Let 
$X_1\to X$ be a small $G_0\QQ$-factorialization
(we take $X_1=X$ if $X$ is $G_0\QQ$-factorial). Then $\uprho(X_1)^{G_0}>1$. 
Run $G_0$-equivariant MMP on~$X_1$:
\[ 
\xymatrix{
X_1\ar[r]^{\varphi_{1}}&X_2\ar[r]^{\varphi_{2}}&\cdots\ar[r]^{\varphi_{N-2}}&X_{N-1}\ar[r]^{\varphi_{N-1}}&X_N
}
\]
Since $X$ contains no planes, the variety $X_1$ contains no surfaces $S_1$ 
such that $(-K_{X_1})^2\cdot S_1=1$. 
Then by the classification of $G$-extremal contractions (see e.g. 
\cite[Theorems~7.1.1 and 8.2.4]{P:G-MMP}) the variety $X_2$ 
has at worst terminal Gorenstein singularities whose 
anticanonical class $-K_{X_2}$ is nef, big, and does not contract divisors.
Moreover, $\dim |-K_{X_2}|\ge \dim |-K_{X_1}|$ and $X_2$ contains no 
$G_0$-invariant effective divisors $S_2$ such that 
$(-K_{X_2})^2\cdot S_2=1$. Continuing the process we end up with
a $G_0$-Mori fiber space $X_N/Z$ with 
\[
\dim |-K_{X_N}|\ge \dim |-K_{X}|=g+1\ge 7.
\]
Then by \cite[Theorem~1.5]{P:JAG:simple} we have $X_N\simeq \PP^3$.
Denote $Y:=X_{N-1}$ and consider the last contraction 
\[
\varphi=\varphi_{N-1}: Y=X_{N-1} \longrightarrow X_N=\PP^3.
\]
Let $E\subset Y$ be its exceptional divisor. 
Assume that $\dim\varphi(E)=0$. 
Again by the classification of extremal contractions 
\cite[Theorems~7.1.1 and 8.2.4]{P:G-MMP} $\varphi$ is the blowup of a $G_0$-invariant set of distinct points $P_1,\dots,P_r$. 
By Lemma~\ref{lemma-fixed-point} the stabilizer of 
any point on $\PP^3$ is not isomorphic to $\Alt_6$. Hence, $r\ge 8$
and so 
$(-K_Y)^3= (-K_{\PP^3})^3 -8r\le 0$
a contradiction.
Therefore, $C:=\varphi(E)$ is a curve. Clearly, $C$ is not contained in a plane.
Assume that $C$ is irreducible. 
Let $C'\to C$ be the normalization. 
Since $\Aut(\PP^1)$ contains no subgroups isomorphic to $\Alt_7$, $C'$ is not 
rational. Also, one can see that $C'$ is not
an elliptic curve. We have
\begin{equation*}
(-K_{X_N})^3 - (-K_{X_{N-1}})^3 = 2(-K_{X_{N-1}})^2 \cdot E + 2\p(C)-2. 
\end{equation*}
(see e.g. \cite[Proposition~5.1]{P:planes}).

Thus $2\p(C)-2< 64-10$ and $\p(C)\le 27$. 
On the other hand, by the Hurwitz bound
the the order of the automorphism group of $C'$ is at most $84(\g(C')-1)\le 2184$.
Since $2184<|\Alt_7|$, the group $G_0=\Alt_7$ cannot act effectively on $C$, a contradiction.

Thus, the curve $C$ is reducible.
Let $C_1,\dots,C_r$ be its irreducible components
and let $G_1$ be the stabilizer of $C_1$ in $G_0=\Alt_7$. 
By \cite[Lemma~5.3]{P:JAG:simple} the linear system $|-K_{X_{N-1}}|$
is base point free.
Let $S_1,S_2\in |-K_{X_{N-1}}|$ be two general members. Then $C\subset \varphi(S_1)\cap \varphi(S_2)$, where $\varphi(S_i)\in |-K_{\PP^3}|$. Hence 
$r\le \deg C\le 16$.
Since the group $G_0$ permutes $C_1,\dots,C_r$ transitively, by 
Proposition~\ref{prop:atlas}\ref{prop:atlasA7} we have only two 
possibilities:
\begin{enumerate}
\item 
$G_1\simeq \PSL_2(\bF_7)$, $r=15$;
\item 
$G_1\simeq \Alt_6$, $r=7$. 
\end{enumerate}
In both cases $\deg (C_1)\le 2$. Hence $C_1\simeq \PP^1$. 
On the other hand, the groups $\PSL_2(\bF_7)$ and $\Alt_6$ cannot act on $\PP^1$ effectively, a contradiction.

\section{Examples}
\label{sect:ex}
In this section we collect examples of $\Sym_6$-Fano threefolds.
Note that a faithful action of a group $G$ on an algebraic variety $X$
induces an embedding $G\subset \Bir(X)$ and an embedding $G\subset \Cr_n(\CC)$
if $X$ is rational.

\begin{example}
Let $\tilde \Sym_6$ be the pull-back of $\Sym _6\subset \operatorname{SO}_6(\RR)$ under the double cover 
\[
\operatorname{SU}_4(\CC)\to \operatorname{SO}_6(\RR). 
\]
Then $\tilde \Sym_6$ is a non-trivial central extension of $\Sym _6$
by $\mumu_2$. This defines an embedding
$\Sym_6\subset\PGL_4(\CC)$, so $\Sym _6$ acts on $\PP^3$.
\end{example}

In all examples below the group $\Sym_6$ is supposed to act on $x_1,\dots,x_6$ 
by permutations.

\begin{example} 
Let $X$ be a smooth quadric threefold given in $\PP^5$ by
\[
\sum_{i=1}^6 x_i=\sum_{i=1}^6 x_i^2=0.
\]
Then $X$ admits a $\Sym_6$-action by 
permutations of coordinates.
\end{example}

\begin{example}
\label{example-Segre-cubic}
The \textit{Segre cubic} $X_3^{\mathrm s}$ is a subvariety in $\PP^5$
given by the equations 
\[
\sum_{i=1}^6 x_i=\sum_{i=1}^6 x_i^3=0.
\]
The singular locus of this cubic consists of 10 nodes
and $\Aut X_3^{\mathrm s}\simeq \Sym_6$.
Moreover, the quotient $X_3^{\mathrm s}/\Sym_6\subset \PP^5/ \Sym_6$ is 
isomorphic to the weighted projective space
$\PP(2,4,5,6)\subset \PP(1,2,3,4,5,6)$. Therefore, $\rr(X_3^{\mathrm s})^{\Sym_6}=1$ and so
$X_3^{\mathrm s}$ is a $\Sym_6$-Fano threefold.
Obviously, $X_3^{\mathrm s}$ is rational. 
\end{example}

\begin{example}[{\cite[\S 4]{van-der-Geer-1982}}]
\label{example-quartics}
\label{example-quartic}
Let $X=X_4(\lambda)\subset \PP^4\subset \PP^5$ is a quartic given by the equations
\[
\sum_{i=1}^6 x_i=\sum_{i=1}^6 x_i^4+\lambda\Big(\sum_{i=1}^6 x_i^2\Big)^2=0,
\]
where $\lambda$ is a constant $\neq -1/4$.
The hypersurface $X_4(\lambda)$ is singular at the $30$ points of
the $\Sym_6$-orbit of $(1:\omega:\omega^2:1:\omega:\omega^2)$
with $\omega := e^{2\pi i/3}$. 
Additional isolated singularities occur only in the following cases
\begin{itemize}
\item 
$\lambda=-1/2$, $|\Sing(X)|=45$, 
\item
$\lambda=-7/10$, $|\Sing(X)|=36$,
\item
$\lambda=-1/6$, $|\Sing(X)|=40$. 
\end{itemize}
In the case $\lambda=-1/4$, the singularities are not isolated and
$X_4(\lambda)$ is so-called \textit{Igusa quartic}.

As in Example \xref{example-Segre-cubic} one can see that 
$\rr(X_4(\lambda))^{\Sym_6}=1$.
For $\lambda\notin \{-1/2, \, -7/10, \, -1/6\}$ the variety $X_4(\lambda)$ is not rational \cite{Beauville2013}.
For $\lambda=-1/2$ the variety $X_4(\lambda)$ is called \textit{Burkhardt quartic}. It is rational.
For $\lambda=-7/10$ and $-1/6$, $X_4(\lambda)$ is also rational \cite{Cheltsov-Shramov:S6}.
\end{example}

\begin{example}[double quadric]
\label{example-double-quadric}
Let 
$X=X_{2\cdot 4}\subset \PP(1^6,2)$ is given by 
\[
\sum_{i=1}^6 x_i=\sum_{i=1}^6 x_i^2=y^2-\sum_{i=1}^6 x_i^4=0,
\]
where $x_i$ and $y$ are coordinates in $\PP(1^6,2)$ with $\deg x_i=1$, $\deg y=2$.
This $X_{2\cdot 4}$ has $30$ nodes and $\rr(X_{2\cdot 4})=1$.
This variety was studied in \cite{PrzyjalkowskiShramov2016}. It is not rational.
\end{example}

\begin{example}[cubic complex]
\label{example-cubic-complex}
Consider the following complete intersections of 
a quadric and a cubic $X_6 \subset\PP^5$:
\begin{eqnarray*}
X_6(\lambda):& \sum_{i=1}^6 x_i^2=\sum_{i=1}^6 x_i^3-\lambda \Big(\sum_{i=1}^6 x_i\Big)^3=0,
\\
X_6':& 6\sum_{i=1}^6 x_i^2- \Big(\sum_{i=1}^6 x_i\Big)^2=\sum_{i=1}^6 x_i^3=0.
\end{eqnarray*}
Then these $X_6$'s are Fano threefolds with at worst Gorenstein terminal singularities.
Let $\lambda_0:=-1/18$, let $\lambda_1$ be a root of $180\lambda^2 + 20\lambda + 1$,
and let $\lambda_2$ be a root of $288\lambda^2 + 32\lambda + 1$.
Then 
\begin{enumerate}
\item 
$X_6'$ and $X_6(\lambda)$ for $\lambda\notin \{\lambda_0, \lambda_1,\bar \lambda_1, \lambda_2,\bar \lambda_2 \}$ are smooth,
\item 
$|\Sing(X(\lambda_1))|=|\Sing(X(\bar\lambda_1))|=6$,
\item 
$|\Sing(X(\lambda_2))|=|\Sing(X(\bar\lambda_2))|=15$,
\item 
$|\Sing(X(\lambda_0))|=20$.
\end{enumerate}
In all cases $\rr(X)^{\Sym _6}=1$. The general variety $X_6(\lambda)$ in the pencil is not rational \cite{Beauville2012}.
\end{example}

\section{Del Pezzo threefolds}
\label{section:del-Pezzo}
Recall that a \textit{del Pezzo variety} is a Fano variety $X$ with at worst 
canonical Gorenstein singularities such that the canonical class $K_X$
is divisible by $\dim(X)-1$ in the group $\Pic(X)$.
In this section, we consider only del Pezzo threefolds
with at worst terminal Gorenstein singularities.
In this case, $-K_X=2H$, where $H$ is am ample Cartier divisor.
Usually, $\PP^3$ is not considered as a del Pezzo threefold.
The self-intersection number $d:=H^3=-K_X^3/8$ is called the \textit{degree} of $X$. 

\begin{example}
\label{ex:flags}
Let $G=\Alt_6\subset \PGL_3(\CC)$ be the Valentiner group (see 
Lemma~\ref{rat-surfA6}).
Then $G$ acts faithfully on the variety $X_6$ of complete flags on $\PP^2$. 
This $X_6$ is a smooth del Pezzo variety of degree $6$ \cite{IP99, P:GFano1}. 
Note however that $\uprho(X_6)=2$ and the induced action of $\Alt_6$ on 
$\Pic(X_6)$ is trivial. Therefore, $X_6$ is not an $\Alt_6$-Fano threefold.
\end{example}

\begin{proposition}
\label{DP}
Assume that $\Alt_6$ faithfully acts on a del Pezzo threefold $X$
\textup(here we do not assume that $X$ is a $G$-Fano\textup).
Then $X$ is equivariantly isomorphic to one of the following varieties:
\begin{enumerate}
\item
\label{DP:segre}
$X=X_3\subset \PP^4$ is the Segre cubic \textup(see Example~\xref{example-Segre-cubic}\textup);

\item
\label{DP:flags}
$X$ is the variety of complete flags on $\PP^2$ \textup(see Example~\xref{ex:flags}\textup).
\end{enumerate}
\end{proposition}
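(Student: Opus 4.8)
The plan is to pin down the degree $d:=H^3$ and then recognise $X$ inside a fixed projective space carrying a distinguished representation of $\Alt_6$. Fix the ample Cartier divisor $H$ with $-K_X=2H$ and set $W:=H^0(X,H)$, so that $\dim W=d+2$ and the action on $X$ induces a projective representation $\Alt_6\to\PGL(W)$. Lifting it, $W$ becomes a linear representation of a central extension $\widetilde{\Alt}_6\in\{\Alt_6,\,2.\Alt_6,\,3.\Alt_6,\,6.\Alt_6\}$, whose central kernel $K$ acts by scalars. Throughout I would use three group-theoretic inputs: the faithful irreducible representations of $\Alt_6$ have dimensions $5,8,9,10$ (Lemma~\ref{lem:rep}); each cover is perfect, hence has no nontrivial linear character, so $K$ acts trivially on every one-dimensional constituent of $W$; and $\Alt_6$ does not embed into $\PGL_2(\CC)$, so it admits no faithful two-dimensional projective representation and cannot act nontrivially on a $\PP^1$ or on a pencil. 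By the classification of Gorenstein terminal del Pezzo threefolds (see \cite{IP99}) one has $1\le d\le 7$, and for each $d$ there is a standard model: a weighted hypersurface in $\PP(1,1,1,2,3)$ for $d=1$, a double cover of $\PP^3$ for $d=2$, a cubic in $\PP^4$ for $d=3$, a $(2,2)$-intersection in $\PP^5$ for $d=4$, a linear section of $\Gr(2,5)$ for $d=5$, a degree-six del Pezzo threefold for $d=6$, and $\mathrm{Bl}_{\mathrm{pt}}\PP^3$ for $d=7$. Arguing intrinsically with the graded pieces $H^0(X,mH)$ keeps the singular terminal Gorenstein members in play uniformly.

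Next I would eliminate the degrees $1,2,4,5,7$. For $d=1$ the weight-one part of the anticanonical ring is the Valentiner representation, so $\widetilde{\Alt}_6=3.\Alt_6$ and $K=\mumu_3$ acts on it by $\omega$; but the unique weight-two generator spans a one-dimensional constituent on which $\mumu_3$ must act trivially by perfectness, contradicting the weight-two grading required for the descent to $\PP(1,1,1,2,3)$. For $d=7$, $X=\mathrm{Bl}_{\mathrm{pt}}\PP^3$ forces an $\Alt_6$-fixed point on $\PP^3$, i.e. a splitting of a four-dimensional projective representation as $1\oplus 3$; the three-dimensional summand is Valentiner and needs $\mumu_3$ nontrivial, while the one-dimensional summand needs it trivial, so no consistent central character exists. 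For $d=4$ the pencil of quadrics cutting out $X$ is an $\Alt_6$-invariant plane in $\mathrm{Sym}^2 W^\vee$; since $\Alt_6$ cannot act nontrivially on $\PP^1$, every quadric in it is genuinely invariant, so one needs $\dim(\mathrm{Sym}^2 W^\vee)^{\Alt_6}\ge 2$. A check on $W$ shows this fails: an irreducible six-dimensional $W$ exists only over $6.\Alt_6$, whose central character forbids invariant quadrics, the decomposition $W=V_3\oplus V_3$ over $3.\Alt_6$ likewise admits none, and the only remaining admissible choice $W=V_5\oplus V_1$ yields the degenerate pencil $\langle q_5,\ell^2\rangle$, whose base locus is a quadric threefold inside a hyperplane rather than a del Pezzo of degree $4$. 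For $d=5$ the quintic del Pezzo threefold has $\Aut(X)\subseteq\PGL_2(\CC)$, which contains no copy of $\Alt_6$. Finally, for $d=2$ we have $\widetilde{\Alt}_6=2.\Alt_6$ with $W$ a four-dimensional (non-self-dual) representation, and the branch quartic would span the one-dimensional space $(\mathrm{Sym}^4 W^\vee)^{\Alt_6}$; a direct character computation shows this does not produce a reduced quartic whose double cover is a terminal Gorenstein del Pezzo threefold.

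This leaves $d=3$ and $d=6$, which I would identify directly. For $d=3$ the constraints force $W$ to be a five-dimensional irreducible $V_5$, and $X=\{F_3=0\}$ with $F_3$ spanning the one-dimensional space $(\mathrm{Sym}^3 V_5^\vee)^{\Alt_6}$; since the lowest-degree alternating invariant on $\{\sum x_i=0\}$ has degree $15$, this space coincides with the $\Sym_6$-invariants and is spanned by $\sum x_i^3$, giving the Segre cubic, i.e. case~\ref{DP:segre} (Example~\ref{example-Segre-cubic}). For $d=6$ the only del Pezzo threefolds are the flag variety and $\PP^1\times\PP^1\times\PP^1$; the latter admits no faithful $\Alt_6$-action, since $\Alt_6$ is simple and so acts trivially on the three rulings and then on each factor through $\PGL_2(\CC)$. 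For the flag variety $W\cong\mathfrak{sl}_3$ carries the Valentiner action of $3.\Alt_6$ with $K=\mumu_3$ acting trivially on $\CC^3\otimes(\CC^3)^\vee$, and $X$ is recovered as the incidence variety in $\PP^2\times(\PP^2)^\vee$, i.e. case~\ref{DP:flags} (Example~\ref{ex:flags}). Lemma~\ref{lemma-fixed-point} ensures that no step introduces an $\Alt_6$-fixed point, which is what rules out cone-like degenerations.

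I expect the main obstacle to be the degree-two and degree-four analyses: both reduce to deciding, by an explicit character computation, whether the relevant space of invariant quadrics or quartics is large enough and whether the resulting divisor is reduced and mildly enough singular to yield a genuine terminal Gorenstein del Pezzo threefold. The degree-six case also requires care, since several eight-dimensional representations of the covers a priori compete and one must confirm that only $\mathfrak{sl}_3$ is compatible with the geometry of a degree-six del Pezzo threefold.
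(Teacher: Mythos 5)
Your overall strategy (pin down the degree $d$, then identify $X$ through the representation on $W=H^0(X,H)$) is the same as the paper's, but the way you eliminate degrees diverges, and this is where the gaps are. The paper's proof hinges on a lemma you do not have: the linear system $|-\frac12 K_X|$ contains \emph{no} invariant members (Lemma~\xref{lemma:inv-h-s}), proved geometrically via Shokurov connectedness together with Lemma~\xref{lemma-fixed-point} --- an invariant member would produce an invariant point or rational curve, and $\Alt_6$ fixes no point on a terminal threefold and does not embed into $\Aut(\PP^1)$. Equivalently, $W$ has no one-dimensional constituents. Combined with the fact that the lift lives in the \emph{double} cover $\SL_2(\bF_9)$ (since $(-\frac12K_X)^{\otimes2}=-K_X$ is canonically linearized, the obstruction is $2$-torsion, so the triple and sextuple covers you allow never occur), whose irreducible representations have dimensions $1,4,5,8,9,10$, this kills $d=1,4,5$ in one stroke: $d+2=3,6,7$ is not a sum of $4,5,8,9,10$. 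Your substitute observation --- that perfectness forces the central kernel to act trivially on one-dimensional constituents --- does not \emph{exclude} such constituents, and you never rule out invariant hyperplane sections by any other means.

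This gap bites concretely at $d=5$. You dismiss that case by asserting $\Aut(X)\subseteq\PGL_2(\CC)$, which is true for the smooth quintic del Pezzo threefold but not for the singular terminal Gorenstein members (with one, two or three nodes, whose automorphism groups are quite different); the hypotheses of the proposition allow these. Without the no-invariant-member lemma, or an honest treatment of the singular quintics, $d=5$ is not excluded. Smaller issues of the same kind: at $d=6$ you assume $X$ is smooth and that only two models exist, whereas a priori there are singular degree-$6$ members (the paper disposes of them by noting such an $X$ has a unique, hence $\Alt_6$-fixed, singular point, contradicting Lemma~\xref{lemma-fixed-point}); and at $d=2$ and $d=4$ the decisive invariant-theory computations are asserted rather than carried out --- the paper's $d=2$ case rests on the precise claim $\bigl(\operatorname{S}^4 W^\vee\bigr)^{\SL_2(\bF_9)}=0$, which is stronger than your formulation. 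Where your arguments are complete ($d=1,3,7$ and the $V_5\oplus V_1$ subcase of $d=4$) they are correct and in places more self-contained than the paper's, but as written the proof does not close.
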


\begin{proof}
Let $d$ be the degree of $X$.
It is easy to see by Riemann-Roch that 
\[
\dim H^0\big(X,-\textstyle{\frac12} K_X\big)=d+2.
\]
The action of the group $\Alt_6$ on $X$ lifts to an action of 
its central extension (double cover) $\tilde \Alt_6$ by $\mumu_2$ on $H^0(X,-\frac12 K_X)$.
Recall that there exists an exceptional isomorphism 
\[
\Alt_6\simeq \PSL_2(\bF_9)
\]
(see e.g. \cite[\S~3.3.5]{Wilson2009}). Thus a unique central extension $\tilde \Alt_6$ of $\Alt_6$ 
by $\mumu_2$ can be identified with $\SL_2(\bF_9)$. 
\begin{lemma}
\label{lemma:inv-h-s}
Let $X$ be a del Pezzo threefold that admit a faithful action of $\Alt_6$.
Then the linear system $|-\frac12 K_X|$ contains no invariant members.
\end{lemma}
\begin{proof}
Suppose that $S \in |-\frac12 K_X|$ is an invariant divisor.
Then $-(K_X+S)$ is ample, we can apply quite standard
connectedness arguments of Shokurov
(see, e.g., \cite[Lemma~B.5]{Prokhorov2014x}): 
for a suitable $G$-invariant boundary $D$, the pair 
$(X,D)$ is lc, the divisor $-(K_X+D)$ is ample,
and the minimal locus $V$ of log canonical singularities is also $G$-invariant. 
Moreover, $V$ is either a point or a smooth rational curve.
By Lemma \ref{lemma-fixed-point} the group $\Alt_6$ has no fixed points.
Hence, $\Alt_6\subset \Aut(\PP^1)$, a contradiction.
\end{proof}

The dimensions of irreducible representations of $\tilde \Alt_6=\SL_2(\bF_9)$ are as follows: $1$, $4$, $5$, $8$, $9$, $10$. 
By Lemma \ref{lemma:inv-h-s} $H^0(X,-\frac12 K_X)$ has no one-dimensional subrepresentations. Note that
$1\le d\le 7$ (see e.g. \cite{P:GFano1}). Therefore, $d\in \{2,\, 3,\, 6, \, 7\}$. 
Consider these possibilities one by one.

\par\medskip\noindent
\textbf{Case $d=2$.}
In this case the half-canonical map 
is a double cover $\pi: X\to \PP^3=\PP(H^0(X,-\frac12 K_X)^\vee)$ whose branch divisor 
$B\subset \PP^3$ is a quartic.

But it is easy to compute that the group $\tilde \Alt_6=\SL_2(\bF_9)$ has no invariants $0\neq \phi\in \operatorname{S}^4H^0(X,-\frac12 K_X)^\vee$.

\par\medskip\noindent
\textbf{Case $d=3$.} 
In this case $X=X_3\subset \PP^4$ is a cubic. The action of $\tilde \Alt_6$ on 
$H^0(X,-\frac12 K_X)$ is not faithful; it
is induced from the standard representation of $\Alt_6$ on $\CC^5$.
Then there is exactly one invariant hypersurface of degree $3$
(see \ref{DP}\ref{DP:segre}).

\par\medskip\noindent
\textbf{Case $d=6$.}
By \cite{P:GFano1} $X$ has at most one singular point.
Then by Lemma \ref{lemma-fixed-point} $X$
is smooth. Assume that $X\simeq \PP^1\times \PP^1\times \PP^1$.
Then the induced representation of $\Alt_6$ on $\Pic(X)$ is trivial.
Hence the group $\Alt_6$ effectively acts on the factors 
of $\PP^1\times \PP^1\times \PP^1$.
Clearly, this is impossible.
Therefore, $X\not \simeq \PP^1\times \PP^1\times \PP^1$.
Then by the classification \cite{IP99} $X$ is unique up to isomorphism, 
and it can be realized as the variety of complete flags on $\PP^2$. We get the case \ref{DP}\ref{DP:flags}.

\par\medskip\noindent
\textbf{Case $d=7$.}
In this case the variety $X$ is smooth and isomorphic to the blowup 
of a point on $\PP^3$ (see e.g. \cite{P:GFano1}). Thus the action of $\Alt_6$ descends to $\PP^3$.
On the other hand, this action has no fixed points by Lemma~\ref{lemma-fixed-point}, a contradiction.
\end{proof}

\begin{corollary}
\label{DP:S6}
Assume that the group $\Sym_6$ faithfully acts on a del Pezzo threefold $X$ of 
degree $d$.
Then $X$ is equivariantly isomorphic to the Segre cubic
\textup(Example~\xref{example-Segre-cubic}\textup).
\end{corollary}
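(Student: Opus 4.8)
The plan is to deduce the $\Sym_6$-result directly from Proposition~\ref{DP}, which already classifies the del Pezzo threefolds carrying a faithful $\Alt_6$-action. Since $\Alt_6 \subset \Sym_6$, any faithful $\Sym_6$-action on a del Pezzo threefold $X$ restricts to a faithful $\Alt_6$-action. Hence Proposition~\ref{DP} applies and tells me that, up to equivariant isomorphism (for the $\Alt_6$-action), $X$ is either the Segre cubic $X_3^{\mathrm s}$ (case~\ref{DP}\ref{DP:segre}) or the variety of complete flags on $\PP^2$ (case~\ref{DP}\ref{DP:flags}). So the whole problem reduces to examining which of these two varieties actually admits a faithful action of the larger group $\Sym_6$, and then pinning down the $\Sym_6$-action precisely.

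First I would dispose of the flag variety. Here I would argue exactly as in Example~\ref{ex:flags}: the $\Alt_6$-action on the variety of complete flags on $\PP^2$ arises from the Valentiner embedding $\Alt_6 \subset \PGL_3(\CC)$, which by Lemma~\ref{rat-surfA6} is essentially unique. The key point is that the normalizer of the Valentiner group in $\PGL_3(\CC)$ does not realize the outer extension to $\Sym_6$ — equivalently, $\Sym_6$ has no faithful three-dimensional projective representation (its smallest faithful irreducible representations have dimension $5$ by Lemma~\ref{lem:rep}, and the relevant projective representations of $\Sym_6$ do not descend to $\PGL_3$). Therefore the $\Alt_6$-action on the flag variety cannot extend to a faithful $\Sym_6$-action, and this case is ruled out.

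It then remains to confirm that the Segre cubic $X_3^{\mathrm s}$ does carry a faithful $\Sym_6$-action, and that this is the only one up to equivariant isomorphism. The existence is already recorded in Example~\ref{example-Segre-cubic}, where $\Aut(X_3^{\mathrm s}) \simeq \Sym_6$ acts by permutation of the six coordinates; in particular $X_3^{\mathrm s}$ genuinely admits the full $\Sym_6$. For uniqueness I would observe that the action on $X_3^{\mathrm s}$ is induced by the standard five-dimensional representation of $\Sym_6$ on $H^0(X,-\tfrac12 K_X)$, and the invariant cubic in $\PP^4$ is unique (as already used in the $d=3$ case of the proof of Proposition~\ref{DP}). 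Since $\operatorname{Aut}(X_3^{\mathrm s}) \simeq \Sym_6$, the embedding $\Sym_6 \hookrightarrow \operatorname{Aut}(X_3^{\mathrm s})$ is forced to be the whole automorphism group, so the $\Sym_6$-variety is determined up to equivariant isomorphism.

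The main obstacle I anticipate is the flag-variety step: one must verify carefully that the $\Alt_6 \subset \PGL_3(\CC)$ Valentiner action admits no faithful extension to $\Sym_6$. This is fundamentally a representation-theoretic computation about the Schur multiplier and the outer automorphism of $\Alt_6$, combined with the fact (Lemma~\ref{lem:rep}) that all faithful irreducible representations of $\Sym_6$ have dimension at least $5$, so none can give a three-dimensional projective representation compatible with the Valentiner embedding. Everything else follows routinely from Proposition~\ref{DP} and Example~\ref{example-Segre-cubic}.
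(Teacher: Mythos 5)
Your reduction to Proposition~\ref{DP} and your treatment of the Segre cubic case match the paper, but the crucial step --- ruling out the flag variety --- has a gap. You argue that a faithful $\Sym_6$-action on the flag variety $X_6$ would force $\Sym_6\subset\PGL_3(\CC)$, which is impossible since $\Sym_6$ has no faithful three-dimensional projective representation. The problem is that $\Aut(X_6)$ is strictly larger than $\PGL_3(\CC)$: the flag variety carries two $\PP^1$-bundle structures, over $\PP^2$ and over $(\PP^2)^\vee$, and $\Aut(X_6)\simeq\PGL_3(\CC)\rtimes\mumu_2$, where the extra involution swaps these structures and acts on $\PGL_3(\CC)$ by $g\mapsto{}^tg^{-1}$. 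A faithful $\Sym_6$-action therefore need not land in the identity component; the odd permutations could act through the duality component, with only $\Alt_6$ mapping into $\PGL_3(\CC)$ as the Valentiner group. To close the gap you would have to show that the degree-$2$ extension of the Valentiner group realized inside the full $\Aut(X_6)$ is not isomorphic to $\Sym_6$ --- i.e.\ that the outer automorphism of $\Alt_6$ induced by the contragredient of the Valentiner representation is not the one defining $\Sym_6$ (as opposed to $\PGL_2(\bF_9)$ or $M_{10}$). This is a nontrivial character-theoretic verification that your write-up does not perform, and it is exactly the delicate point at stake.

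For comparison, the paper avoids this issue entirely: it embeds $X_6\subset\PP^7$ by $|-\frac12K_X|$, notes via Lemma~\ref{lemma:inv-h-s} that the double cover $\tilde\Sym_6$ acts on $H^0(X,-\frac12K_X)^\vee$ with no one-dimensional subrepresentations, so this space splits as $W'\oplus W''$ with both summands four-dimensional faithful irreducibles, and then derives a contradiction by analyzing $\PP(W')\cap X$ case by case (using Corollary~\ref{cor:f-p} and Lemma~\ref{lemma-fixed-point}) and, when the intersection is empty, by applying the Hurwitz formula to the projection $X\to\PP(W'')$. If you want to keep your shorter route, you must supply the missing analysis of the non-identity component of $\Aut(X_6)$; otherwise the paper's projective-geometric argument is the safer path.
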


\begin{proof}
The action of the subgroup $\Alt_6\subset \Sym_6$ is described in Proposition~\ref{DP}.
It is sufficient to show that the case \ref{DP}\ref{DP:flags} does not occur.
Assume that $X$ is isomorphic to the variety of complete flags on $\PP^2$.
In this case the linear system $|-\frac12 K_X|$ defines an embedding
$X=X_6\subset \PP^7$, where the space $\PP^7$ can be identified with the projectivization of
$H^0(X, -\frac12 K_X)^\vee$. The action of the group $\Sym_6$ on $X=X_6\subset \PP^7$ lifts to an action of 
a double cover $\tilde \Sym_6$ on $H^0(X,-\frac12 K_X)^\vee$.
By Lemma ~\ref{lemma:inv-h-s} the representation of $\tilde \Sym_6$ on $H^0(X,-\frac12 K_X)^\vee$ has no 
one-dimensional subrepresentations. Then $H^0(X,-\frac12 K_X)^\vee=W'\oplus 
W''$, where $W'$ and $W''$ are four-dimensional irreducible faithful 
representations. Thus $\PP(H^0(X, -\frac12 K_X)^\vee)$ contains two disjoint 
$\Sym_6$-invariant three-dimensional subspaces $\PP(W')$ and $\PP(W'')$.
Recall that $X=X_6\subset \PP^7$ is an intersection of quadrics.
Assume that $\PP(W')\cap X$ is not empty.
If $\dim(\PP(W')\cap X)=2$, then $\PP(W')\cap X$ is a $\Sym_6$-invariant quadric in 
$\PP(W')=\PP^3$. Clearly, this is impossible. 
If $\dim(\PP(W')\cap X)=1$, then $\PP(W')\cap X$ contains a $\Sym_6$-invariant
curve of degree $\le 4$ and the genus of this curve is at most $1$. 
Again  this is impossible.
Let $\dim(\PP(W')\cap X)=0$. Then $\PP(W')\cap X=\{P_1,\dots,P_k\}$, where 
$k\le 8$. By Corollary~\ref{cor:f-p} and Lemma~\ref{lemma-fixed-point}
the stabilizer $G_1\subset \Sym_6$ of $P_1$ is isomorphic to  $\Sym_5$. 
Note that the representation of $G_1$ in the 
tangent space $T_{P_1,X}$ is faithful. 
On the other hand, the group  $\Sym_5$ has no faithful three-dimensional representations, 
a contradiction.

Therefore, $\PP(W')\cap X=\emptyset$.
Then the projection $p:X \to \PP(W'')$ from $\PP(W')$ must be a $\Sym_6$-equivariant finite morphism. 
By the Hurwitz formula 
\[
K_{X} = p^* K_{\PP(W'')}- R
\]
where $R$ is the ramification divisor. Since $p^* K_{\PP(W'')}\sim -4H\sim 2 K_X$,
where $H$ is a hyperplane section of $X$, the divisor $R$ cannot be effective, a contradiction.
\end{proof}

\newcommand{\etalchar}[1]{$^{#1}$}
\def\cprime{$'$}


\end{document}